\documentclass[a4paper]{amsart}

\usepackage{enumerate}
 
\usepackage{mathtools}
\usepackage{amssymb}
\usepackage{cite}

\newcommand{\XN}{ \ensuremath{ X^{x,N} } }
\newcommand{\YN}{ \ensuremath{ Y^{x,N} } }
\newcommand{\ZN}{ \ensuremath{ Z^{x,N} } }
\newcommand{\YhatN}{ \ensuremath{ \hat{Y}^{x,N} } }
\newcommand{\eN}{ \ensuremath{ e^{x,N} } }

\newcommand{\dWN}{ \ensuremath{ \Delta W^N } }
\newcommand{\Wbar}{ \ensuremath{ \bar{W}^N } }

\newcommand{\oss}{ \ensuremath{ \Theta } }

\newcommand{\lerr}{ \ensuremath{ \Delta_\mathrm{loc} } }
\newcommand{\mil}{ \ensuremath{ \varphi_\mathrm{Mil} } }
\newcommand{\tmil}{ \ensuremath{ \oss_\mathrm{Mil} } }
\newcommand{\htil}{ \ensuremath{ \tilde{h} } }
\newcommand{\Ytmil}{ \ensuremath{ \bar{Y}^{x,N} } }
\newcommand{\exin}{ \ensuremath{\alpha_\mathrm{ex} } }

\newcommand{\N}{\mathbb{N}}

\newcommand{\R}{\mathbb{R}}

\newcommand{\eps}{\ensuremath{\varepsilon}}
\newcommand{\set}[1]{\ensuremath{ \left\{ #1 \right\} } }
\newcommand{\ind}[1]{\ensuremath{ 1_{ #1 } } }

\newcommand{\abs}[1]{\ensuremath{ \left\vert #1 \right\vert } }

\newcommand{\dd}{\mathrm{d}}

\newcommand{\E}[1]{\ensuremath{ \mathrm{E}\left( #1 \right) }}
\newcommand{\prob}{\ensuremath{ \mathrm{P} }}
\newcommand{\F}{\ensuremath{ \mathfrak{F} }}

\newcommand{\eqdist}{ \ensuremath{ \stackrel{\mathrm{d}}{=} } }

\theoremstyle{plain}
	\newtheorem{lem}{Lemma}
	\newtheorem{thm}{Theorem}
	\newtheorem{cor}{Corollary}
	\newtheorem{prop}{Proposition}
\theoremstyle{definition}

\theoremstyle{remark}
	\newtheorem{rem}{Remark}
	\newtheorem{ex}{Example}

\begin{document}

\title[Strong Convergence Rates for {CIR} Processes]
	{Strong Convergence Rates for
	{C}ox-{Ingersoll}-{R}oss Processes --
	Full Parameter Range}

\author[Hefter]{Mario Hefter}
\address{Fachbereich Mathematik\\
Technische Universit\"at Kaisers\-lautern\\
Postfach 3049\\
67653 Kaiserslautern\\
Germany}
\email{hefter@mathematik.uni-kl.de}

\author[Herzwurm]{Andr\'{e} Herzwurm}
\address{Fachbereich Mathematik\\
Technische Universit\"at Kaisers\-lautern\\
Postfach 3049\\
67653 Kaiserslautern\\
Germany}
\email{herzwurm@mathematik.uni-kl.de}

\begin{abstract}
	We study strong (pathwise) approximation of Cox-Ingersoll-Ross processes.
	We propose a Milstein-type scheme that is suitably truncated close to zero,
	where the diffusion coefficient fails to be locally Lipschitz continuous.
	For this scheme we prove polynomial convergence rates for the full parameter
	range including the accessible boundary regime. The error criterion is
	given by the maximal $L_p$-distance of the solution and its approximation
	on a compact interval. In the particular case of a squared
	Bessel process of dimension $\delta>0$ the polynomial convergence rate 
	is given by $\min(1,\delta)/(2p)$.
\end{abstract}

\keywords{
	Cox-Ingersoll-Ross process;
	squared Bessel process;
	square-root diffusion coefficient;
	strong approximation;
	Milstein-type scheme}

\subjclass[2010]{65C30, 60H10}

\maketitle

\section{Introduction}
In recent years, strong approximation has been intensively studied for
stochastic differential equations (SDEs) of the form
\begin{align}\label{eq:cir-intro}
	\dd X^x_t = (a-bX^x_t)\,\dd t + \sigma\sqrt{X^x_t}\,\dd W_t, \quad X^x_0=x, \quad t\geq0,
\end{align}
with a one-dimensional Brownian motion $W$, initial value $x\geq0$,
and parameters $a,\sigma>0$, $b\in\R$. It is well known that these SDEs
have a unique non-negative strong solution. Such SDEs often arise
in mathematical finance, e.g., as the volatility process in the
Heston model. Moreover, they were proposed as a model for
interest rates in the Cox-Ingersoll-Ross (CIR) model.
In the particular case of $b=0$ and $\sigma=2$ the solution of
SDE~\eqref{eq:cir-intro} is a squared Bessel process,
which plays in important role in the analysis of Brownian motion.

Strong approximation is of particular interest due to the multi-level
Monte Carlo technique, see \cite{giles1,giles2,heinrich1}.
In this context, a sufficiently high convergence rate with respect to
an $L_2$-norm is crucial. The multi-level Monte Carlo technique is
used for the approximation of the expected value of a functional applied
to the solution of an SDE. In mathematical finance, such a functional
often represents a discounted payoff of some derivative
and the price is then given by the corresponding expected value.

Strong convergence of numerical schemes for the SDE~\eqref{eq:cir-intro}
has been widely studied in the past twenty years. Various schemes
have been proposed and proven to be strongly convergent, see, e.g.,
\cite{alfonsi2005,deelstra1998,higham2005,gyoengy2011,milstein2015,hj2015}.
In recent years, the speed of convergence in terms of polynomial convergence
rates has been intensively studied, see
\cite{bessel1,berkaoui,dereich,alfonsi2013,neuenkirch-szpruch,hjn-cir,chassagneux,bossy}.
In all these articles the approximation error is measured with respect
to the $L_p$-norm either at a single time point,
at the discretization points, or on a compact interval.
Moreover, all these results impose conditions on $p$ (appearing in the $L_p$-norm)
and the quantity
\begin{align}\label{eq:delta}
	\delta = \frac{4a}{\sigma^2}  \in\left]0,\infty\right[,
\end{align}
which depends on the two parameters $a,\sigma>0$ appearing
in \eqref{eq:cir-intro}. None of these results
yield a polynomial convergence rate for $\delta<1$, cf.~Figure~\ref{fig:CIR-rates}.
By the Feller test, the solution remains strictly positive, i.e.,
$\prob(\forall t>0: X^x_t>0)=1$, if and only if $\delta\geq2$.
Roughly speaking, the smaller the value of $\delta$, the closer the
solution to the boundary point zero, where the diffusion coefficient
is not even locally Lipschitz continuous.

\begin{figure}[htbp]
	\centering
	\includegraphics[width=0.9\linewidth]{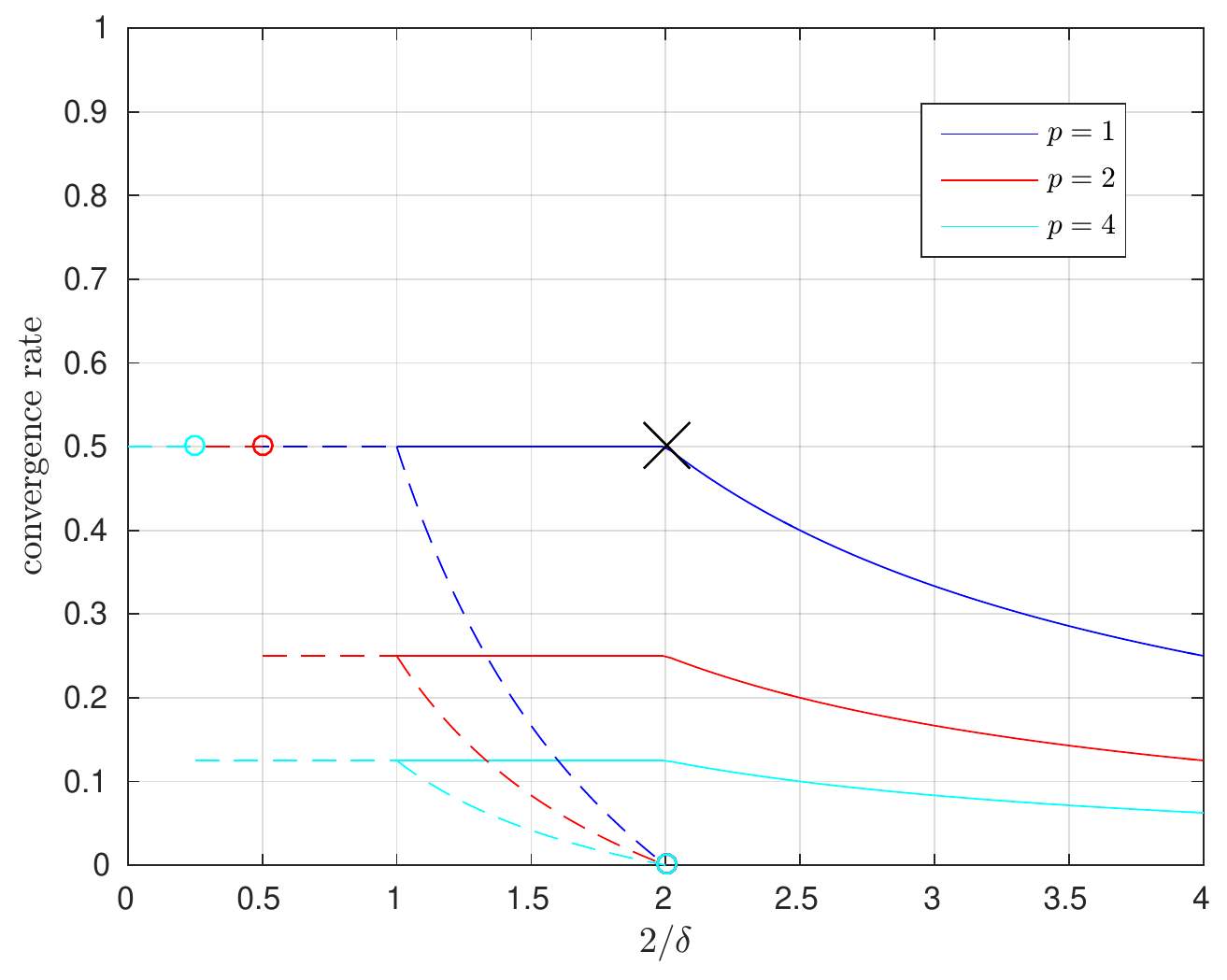}
	\caption{The colored dashed lines show the convergence rates of the known
		best upper bounds for the error criterion in \eqref{eq:intro} for
		different values of $p$, see \cite{dereich,hjn-cir}.
		The corresponding solid lines show the convergence rates obtained
		in this paper. The cross shows the convergence rate at $\delta=1$
		for all $p\in\left[1,\infty\right[$, see \cite{bessel1}.}
	\label{fig:CIR-rates}
\end{figure}

The main aim of this paper is to construct a numerical scheme
for the SDE~\eqref{eq:cir-intro} and to prove its convergence at a polynomial rate
for all parameters. Let $T>0$ and
define the approximation scheme $\Ytmil=(\Ytmil_t)_{0\leq t\leq T}$,
which uses $N\in\N$ increments of Brownian motion, by
\begin{align*}
	\Ytmil_0=x \qquad\text{and}\qquad
		\Ytmil_{(n+1)T/N}
		= \tmil\left( \Ytmil_{nT/N}, T/N, W_{(n+1)T/N}-W_{nT/N} \right),
\end{align*}
for $n=0,\dots,N-1$, where the one-step function
$\tmil\colon\R^+_0\times\left]0,T\right]\times\R\to\R^+_0$ is given by
\begin{align*}
	\tmil(x,t,w) = \left(
			\left( \max\left(
				\sqrt{{\sigma^2}/{4}\cdot t},\sqrt{\max(\sigma^2/4\cdot t,x)}+{\sigma}/{2}\cdot w
			\right) \right)^2
			+ \left( a-\sigma^2/4-b\cdot x \right)\cdot t
		\right)^+.
\end{align*}
This yields a discrete-time approximation of the SDE~\eqref{eq:cir-intro}
on $[0,T]$ based on a grid of mesh size $T/N$.
Furthermore, between two grid points we use constant interpolation to get
a continuous-time approximation, i.e.,
\begin{align*}
	\Ytmil_t = \Ytmil_{nT/N}, \qquad t\in{[nT/N,(n+1)T/N[},
\end{align*}
for $n=0,\dots,N-1$.
We refer to $\Ytmil$ as truncated Milstein scheme. Let us mention that this scheme
coincides with the classical Milstein scheme as long as it is ``away'' from zero.
Moreover, it is suitably truncated close to zero.

\begin{thm}[Main Result]\label{thm:intro}
	Let $\delta>0$ be according to \eqref{eq:delta}.
	For every $1\leq p<\infty$ and every $\eps>0$ there exists a constant $C>0$
	such that
	\begin{align}\label{eq:intro}
			\sup_{0\leq t\leq T}\left( \E{ \abs{X^x_t-\Ytmil_t}^p } \right)^{1/p}
				\leq C\cdot(1+x)\cdot \frac{ 1}{ N^{\min(1,\delta)/(2p)-\eps} }
	\end{align}
	for all $N\in\N$ and for all $x\geq0$.
\end{thm}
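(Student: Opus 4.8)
The plan is to run this in the ``one-step error plus stability'' framework that the exact flow $\sol$, the one-step map $\tmil$, and the local error $\lerr$ suggest. Write $h=T/N$, $t_n=nh$, let $\eN_n=X^x_{t_n}-\Ytmil_{t_n}$ be the grid error, and let $\sol(\xi,s,t)$ denote the value at time $t$ of the CIR solution equal to $\xi$ at time $s$ and driven by the same $W$. Since the scheme satisfies $\Ytmil_{t_{n+1}}=\tmil(\Ytmil_{t_n},h,W_{t_{n+1}}-W_{t_n})$ and the solution satisfies the Markov identity $X^x_{t_{n+1}}=\sol(X^x_{t_n},t_n,t_{n+1})$, I would decompose
\[ \eN_{n+1}=\bigl(\sol(X^x_{t_n},t_n,t_{n+1})-\sol(\Ytmil_{t_n},t_n,t_{n+1})\bigr)+\lerr^{(n)}, \]
where $\lerr^{(n)}=\sol(\Ytmil_{t_n},t_n,t_{n+1})-\Ytmil_{t_{n+1}}$ is the one-step local error. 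This splits the work into (i) how the exact flow transports an existing error over one step and (ii) how large a single truncated Milstein step's local error is. It is worth noting at the outset that the constant interpolation is harmless: as $X^x$ is an It\^o process with coefficients bounded in every $L_p$, it contributes only $\sup_{\abs{t-s}\le h}\norm{X^x_t-X^x_s}_{L_p}\lesssim h^{1/2}$, which is always dominated by the target $\min(1,\delta)/(2p)\le1/2$, so it suffices to control the $\eN_n$.

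For the stability ingredient I would exploit the monotonicity of the square-root SDE. For $x\ge y$ the difference $D_t=X^x_t-X^y_t\ge0$ solves $\dd D_t=-bD_t\,\dd t+\sigma(\sqrt{X^x_t}-\sqrt{X^y_t})\,\dd W_t$, and, using $\sqrt{X^x_t}-\sqrt{X^y_t}=D_t/(\sqrt{X^x_t}+\sqrt{X^y_t})$ together with $(\sqrt{X^x_t}+\sqrt{X^y_t})^2\ge X^x_t\ge D_t$, It\^o's formula applied to $D_t^p$ yields, for $p\ge2$,
\[ \E{D_T^p}\le \E{D_0^p}+C\int_0^T\Bigl(\E{D_t^{p}}+\E{D_t^{p-1}}\Bigr)\,\dd t. \]
The decisive feature is the \emph{lower} moment $\E{D_t^{p-1}}$: together with the base case $p=1$, where the flow is genuinely Lipschitz ($\E{D_t}=e^{-bt}\E{D_0}$), a Gronwall induction gives the non-Lipschitz, H\"older-type stability $\E{\abs{X^x_t-X^y_t}^p}\le C\abs{x-y}$, the sign restriction being removed by the Yamada--Watanabe smoothing of $\abs{\cdot}$. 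Applied over a single step this gives the one-step form $\E{\abs{\sol(U,t_n,t_{n+1})-\sol(V,t_n,t_{n+1})}^p}\le(1+Ch)\E{\abs{U-V}^p}+Ch\,\E{\abs{U-V}^{p-1}}$ for $\F_{t_n}$-measurable $U,V$; this coupling of the $p$-th to the $(p-1)$-th moment is what forces an induction over $p$ rather than a direct estimate.

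For the local error I would expand one Milstein step against the It\^o--Taylor expansion of $\sol$. The special CIR identity $g(x)g'(x)=\sigma^2/2$ (with $g(x)=\sigma\sqrt x$) is constant, so its derivative vanishes and the leading neglected stochastic term drops out, leaving a strong local error of order $h^{3/2}$ but with coefficients growing like $x^{-1/2}$; near the truncation level $\sigma^2h/4$ the truncation in $\tmil$ caps the error at order $h$. Thus, conditionally on $\Ytmil_{t_n}=z$, the $L_1$ local error is of order $h$ for $z\lesssim h$ and of order $h^{3/2}z^{-1/2}$ for $z\gtrsim h$. Integrating against the law of $\Ytmil_{t_n}$, whose distribution satisfies $\prob(\Ytmil_{t_n}\le z)\lesssim z^{\delta/2}$ near the origin, produces $\E{\abs{\lerr^{(n)}}}\lesssim h^{1+\min(1,\delta)/2}$, with a logarithmic correction at the borderline $\delta=1$ that is exactly the source of the $\eps$ in the statement. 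This step requires a priori moment and near-origin occupation bounds for the \emph{scheme} itself, uniform in $n$ and linear in $1+x$; I expect this to be the main obstacle, since near zero neither $X^x$ nor $\Ytmil$ obeys Lipschitz estimates and the entire $\delta$-dependence of the rate is manufactured here.

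Finally I would close by induction on $p$. The base case $p=1$ is clean: telescoping with the Lipschitz flow gives $\E{\abs{\eN_N}}\le e^{\abs{b}T}\sum_n\E{\abs{\lerr^{(n)}}}\lesssim N\cdot h^{1+\min(1,\delta)/2}=h^{\min(1,\delta)/2}$. For the inductive step I insert the one-step stability and the local-error bound into the recursion for $\E{\abs{\eN_{n+1}}^p}$ via a Young splitting, so that the transported-error term carries the factor $(1+Ch)$ while its lower-moment companion $Ch\,\E{\abs{\cdot}^{p-1}}$ feeds in the induction hypothesis; summing the $N$ contributions and using $Nh=T$ makes this companion the binding term and yields $\E{\abs{\eN_n}^p}\le C_p\,(1+x)^p\,h^{\min(1,\delta)/2}$ uniformly in $n$, the genuine local-error term contributing only the dominated order $h^{\min(p,\delta)/2}$. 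Hence $\norm{\eN_n}_{L_p}\lesssim(1+x)\,h^{\min(1,\delta)/(2p)}$ for integer $p$; Jensen's inequality extends this to all real $1\le p<\infty$, the logarithmic borderline factors are absorbed into $N^{-\eps}$, and the harmless interpolation error from the first paragraph completes the bound $\sup_{0\le t\le T}\norm{X^x_t-\Ytmil_t}_{L_p}\le C(1+x)\,N^{-(\min(1,\delta)/(2p)-\eps)}$.
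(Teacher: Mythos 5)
There is a genuine gap, and it sits exactly where you predicted. The decisive problem is the \emph{orientation} of your error decomposition: you write $\eN_{n+1}=\bigl(\sol(X^x_{t_n},t_n,t_{n+1})-\sol(\Ytmil_{t_n},t_n,t_{n+1})\bigr)+\lerr^{(n)}$ with the local error evaluated at the \emph{scheme's} position $\Ytmil_{t_n}$. Consequently every stage of your argument --- already the base case $p=1$ --- needs small-ball and negative-moment bounds for the scheme itself, of the form $\prob(\Ytmil_{t_n}\leq z)\preceq (z/t_n)^{\delta/2}$ and $\E{ (\Ytmil_{t_n})^{-1/2}\cdot\ind{\set{\Ytmil_{t_n}\geq h}} }\preceq t_n^{-1/2}(1+(h/t_n)^{(\delta-1)/2})$, uniformly in $n$ and in $N$. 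You assert the first of these in passing and flag it as ``the main obstacle,'' but it is not a technicality to be filled in later: the exponent $\delta/2$ encodes the $\chi^2_\delta$-type behaviour of the CIR transition density near zero, and nothing about a single truncated Gaussian Milstein step reproduces it --- one would need a discrete propagation-of-smallness argument for the law of $\Ytmil$, which is precisely the kind of result that is unavailable. The paper's proof is architected to avoid this entirely: in Theorem~\ref{thm:strong-L1} the splitting is the opposite one, with the local error (A\ref{ass:local-error}) evaluated at the \emph{exact solution's} position $X^x_{t_n}$ --- whose marginal is dominated, via the comparison principle of Remark~\ref{rem:comparison-principle}, by $X^0_s\eqdist\psi(s)Z$ with explicit $\chi^2_\delta$ density (Remark~\ref{rem:marginal}), which is exactly what Proposition~\ref{prop:balance} integrates against --- while the accumulated error is transported through the \emph{scheme's} one-step map using its $L_1$-Lipschitz property (A\ref{ass:lipschitz}), verified by a direct Gaussian computation in Theorem~\ref{thm:truncated-milstein} and Lemma~\ref{lem:lipschitz-part}. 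Your transport through the exact flow in $L_p$ is, by contrast, genuinely non-Lipschitz for $p>1$ (Proposition~\ref{prop:initial-value}); your It\^o-type stability estimate $\E{\abs{X^x_t-X^y_t}^p}\preceq\abs{x-y}$ is plausibly correct (it is consistent with the lower bound $\exin(\delta,p)\geq 1/p$), but it cannot substitute for the missing occupation bounds on the scheme.

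Two further slips in the $L_p$ stage, smaller but real. First, your uniform-in-$n$ claim $\E{\abs{\lerr^{(n)}}}\preceq h^{1+\min(1,\delta)/2}$ fails for early steps: as in Proposition~\ref{prop:balance}, the average local error at time $t_n$ is of order $h\cdot(h/t_n)^{\min(1,\delta)/2}$ (up to the $\delta=1$ logarithm), and only the \emph{sum} over $n$ is of order $h^{\min(1,\delta)/2}$. Second, passing from integer $p$ to real $p$ by Jensen loses the rate: from the next integer $\lceil p\rceil$ you only get exponent $\min(1,\delta)/(2\lceil p\rceil)$, which is strictly worse than $\min(1,\delta)/(2p)-\eps$ for non-integer $p$; you need the interpolation inequality of Remark~\ref{rem:interpolation} (as in Corollary~\ref{cor:p}), and that in turn requires the uniform $L_q$-boundedness (A\ref{ass:bounded}) of the scheme, an ingredient your outline never establishes and which the paper proves separately in Proposition~\ref{lem:bounded} via a dominating auxiliary scheme and Gronwall's lemma. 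Had these pieces been in place, your induction over integer moments would be a genuinely different (and interpolation-free, except at the last step) route to the same rate; as written, the argument does not close.
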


\begin{figure}[htbp]
	\centering
	\includegraphics[width=0.9\linewidth]{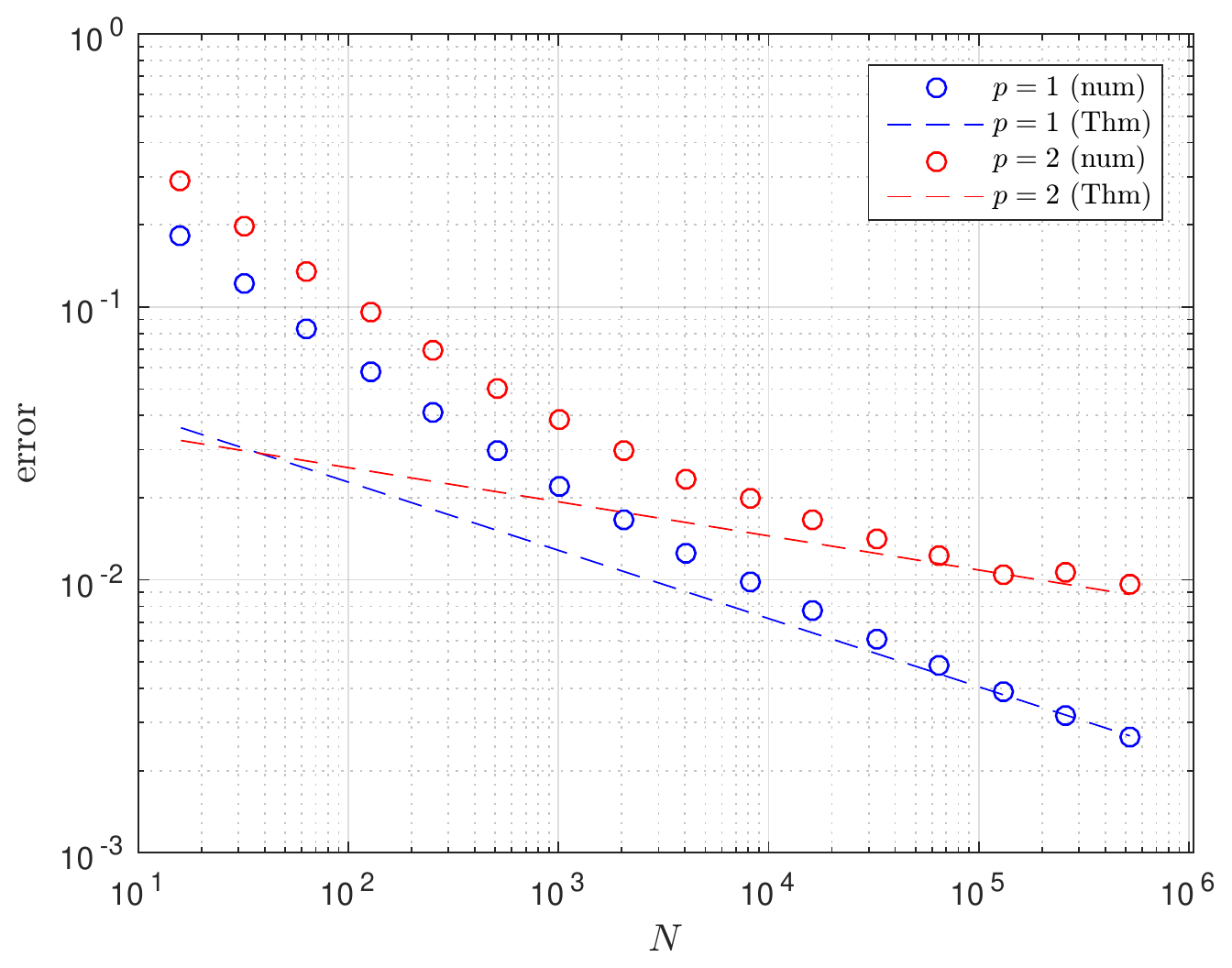}
	\caption{Numerical results for the truncated Milstein scheme
		with $\sigma=2$, $a=\delta=1/2$, $b=T=1$, and $x=1/20$ are shown for $p=1,2$.
		The error given by \eqref{eq:intro} is estimated based on $10^5$ replications
		of consecutive differences.
		The dashed lines show the corresponding convergence rates
		from Theorem~\ref{thm:intro}. For a numerical study of various Euler-
		and Milstein-type schemes (with $2/\delta<3$ and $p=1$) we refer to
		\cite[Fig.~3]{alfonsi2005}.
		}
	\label{fig:numerics}
\end{figure}

The results of Theorem~\ref{thm:intro} are illustrated in
Figure~\ref{fig:CIR-rates}. Observe that the convergence rate in
Theorem~\ref{thm:intro}, given by $\min(1,\delta)/(2p)$,
is monotonically decreasing in $1/\delta$ and $p$.
This is the same monotonicity as in previous results.
The numerical results with $\delta=1/2$ for the truncated Milstein scheme
presented in Figure~\ref{fig:numerics} indicate that the convergence rate
in Theorem~\ref{thm:intro} is sharp for $p=1$.
Furthermore, these results suggest that the $L_p$-norm affects the convergence rate,
as expected from Theorem~\ref{thm:intro}.
We think that the convergence rate in Theorem~\ref{thm:intro} is sharp for $p=1$
in the full parameter range but might be too pessimistic for $p>1$.
Note that a Milstein-type scheme, the drift-implicit Euler scheme, converges at rate
$1/2$ for all $p\geq1$ in the particular situation of $\delta=1$ and $b=0$, see \cite{bessel1}. 
Let us stress that a convergence rate of $1/2$ is optimal for various global error criteria
in case of SDEs satisfying standard Lipschitz assumptions, see, e.g., \cite{gronbach1,hofmann}.

Let us mention that Theorem~\ref{thm:intro} actually holds for a whole class of
approximation schemes. More precisely, a one-step approximation scheme satisfies the error bound
\eqref{eq:intro} if it is $L_1$-Lipschitz continuous, has a Milstein-type local
discretization error and is uniformly bounded, see
(A\ref{ass:lipschitz})-(A\ref{ass:bounded}) for the precise assumptions.
Our analysis is based on the $L_1$-norm since CIR processes are in general
not locally $L_p$-Lipschitz continuous in the initial value if $p>1$,
see Proposition~\ref{prop:initial-value}.
The results for $L_1$ are then extended to $p>1$ by using classical
interpolation techniques. Due to this interpolation the convergence
rate in Theorem~\ref{thm:intro} is divided by $p$.

This paper is organized as follows. In Section~\ref{sec:preliminaries} we
recall some basic properties of the solution of SDE~\eqref{eq:cir-intro}.
In Section~\ref{sec:strong} we provide a general framework for the analysis
of one-step approximation schemes and prove strong convergence rates under suitable
assumptions on such a one-step scheme. In Section~\ref{sec:scheme} we study the
truncated Milstein scheme and show that it satisfies the assumptions
of Section~\ref{sec:strong}.

\section{Notation}
We use $\N=\{1,2,3,\dots\}$, $\N_0=\N\cup\{0\}$, $\R^+=\{x\in\R: x>0\}$,
and $\R^+_0=\R^+\cup\{0\}$.
Moreover, $x^+=\max(0,x)$ denotes the positive part of $x\in\R$.
We use $\eqdist$ to denote equality in distribution.
We do not explicitly indicate if statements are only
valid almost surely, e.g., equality of two random variables.
For functions $f$ and $g$ taking non-negative values we write $f\preceq g$
if there exists a constant $C>0$ such that $f\leq C\cdot g$.
Moreover, we write $f\asymp g$ if $f\preceq g$ and $g\preceq f$.
From the context it will be clear which objects the constant $C$ is allowed
to depend on.

\section{Preliminaries}\label{sec:preliminaries}
In this section we set up the framework and provide some basic facts about the
SDE~\eqref{eq:cir-intro} that will be frequently used.
To simplify the presentation we only consider the SDE~\eqref{eq:cir-intro}
with $\sigma=2$. Moreover, we only study
approximation on the unit interval $[0,1]$ instead of $[0,T]$.
Both simplifications are justified by the following two remarks.

\begin{rem}[Reduction to $\sigma=2$]\label{rem:scaling-space}
	Consider the particular case of SDE~\eqref{eq:cir-intro} given by
	\begin{align*}
		\dd \hat{X}^{\hat{x}}_t = (\delta-b\hat{X}^{\hat{x}}_t)\,\dd t
			+ 2\,\sqrt{\hat{X}^{\hat{x}}_t}\,\dd W_t,
			\quad \hat{X}^{\hat{x}}_0=\hat{x}, \quad t\geq0,
	\end{align*}
	with $\hat x=x\cdot 4/\sigma^2$ and $\delta$ given by \eqref{eq:delta}.
	Then we have
	\begin{align*}
		X_t^x = \frac{\sigma^2}{4}\cdot \hat X^{\hat x}_t
	\end{align*}
	for all $t\geq 0$.
\end{rem}

\begin{rem}[Reduction to $T=1$]\label{rem:scaling-time}
	Consider the instance of SDE~\eqref{eq:cir-intro} given by
	\begin{align*}
		\dd \tilde{X}^{x}_t = (\tilde{a}-\tilde{b} \tilde{X}^{x}_t)\,\dd t
			+ \tilde{\sigma}\,\sqrt{\tilde{X}^{x}_t}\,\dd \tilde{W}_t,
			\quad \tilde{X}^{x}_0={x}, \quad t\geq0,
	\end{align*}
	with $\tilde a=T\cdot a$, $\tilde b=T\cdot b$, $\tilde\sigma=\sqrt{T}\cdot\sigma$,
	and Brownian motion $\tilde W=(\tilde W_t)_{t\geq 0}$ given by
	\begin{align*}
		\tilde W_t=1/\sqrt{T}\cdot W_{t\cdot T}.
	\end{align*}
	Then we have
	\begin{align*}
		X^x_{t} = \tilde X^x_{t/T}
	\end{align*}
	for all $t\geq 0$.
\end{rem}

Throughout the rest of the paper $(\Omega,\mathfrak A,\prob)$ denotes the
underlying probability space that is assumed to be complete and $\F=(\F_t)_{t\geq 0}$
denotes a filtration that satisfies the usual conditions, i.e., $\F_0$ contains
all $\prob$-null-sets and $\F$ is right-continuous.
Moreover, $W=(W_t)_{t\geq 0}$ denotes a scalar Brownian motion with respect to $\F$.
Finally, the process $X^x=(X^x_t)_{t\geq 0}$ with initial value $x\geq 0$ is given by the SDE
\begin{align}\label{eq:cir}
	\dd X^x_t = (\delta-bX^x_t)\,\dd t + 2\sqrt{X^x_t}\,\dd W_t, \quad X^x_0=x, \quad t\geq0,
\end{align}
with fixed parameters $\delta>0$ and $b\in\R$.

\begin{rem}
Let us mention that
\begin{align}\label{eq:mean-sol}
	\E{X^x_t} = x\cdot e^{-bt} + \delta\cdot
		\begin{cases}
			(1-e^{-bt})/b, &\text{if }\ b\neq0, \\
			t, &\text{if }\ b=0,
		\end{cases}
\end{align}
for $t\geq0$, see, e.g., \cite{cir}.
\end{rem}

\begin{rem}[Marginal distribution of CIR processes]\label{rem:marginal}
	Let $Z$ be $\chi^2$-distributed with $\delta$ degrees of freedom,
	i.e., $Z$ admits a Lebesgue density that is proportional to
	\begin{align*}
		z\mapsto z^{\delta/2-1}\cdot \exp(-z/2)\cdot \ind{\R^+}(z).
	\end{align*}
	If $b=0$, we have
	\begin{align*}
		X^0_t \eqdist t\cdot X^0_1 \eqdist t\cdot Z
	\end{align*}
	for $t\geq0$, according to
	\cite[Prop.~XI.1.6]{revuz-yor} and \cite[Cor.~XI.1.4]{revuz-yor}.
	In the general case $b\in\R$ we obtain
	\begin{align}\label{eq:marginal}
		X^0_t \eqdist \psi(t)\cdot Z
	\end{align}
	for $t\geq0$ with
	\begin{align*}
		\psi(t) =
		\begin{cases}
				{(1-e^{-b t})}/{b}, &\text{if }\ b\neq 0,\\
				t, &\text{if }\ b=0,
			\end{cases}
	\end{align*}
	due to \cite[Eq.~(4)]{yor}.
\end{rem}

\begin{rem}\label{rem:linear-growth}
	For every $1\leq p<\infty$ there exists a constant $C>0$ such that
	\begin{align}\label{eq:linear-growth}
		\left( \E{ \sup_{0\leq t\leq 1} \abs{X^x_t}^p} \right)^{1/p}
			\leq C\cdot (1+x)
	\end{align}
	for $x\geq0$, since the coefficients of SDE~\eqref{eq:cir} satisfy
	a linear growth condition, see, e.g., \cite[Thm.~2.4.4]{mao}.
\end{rem}

\begin{rem}[Monotonicity in the initial value]\label{rem:comparison-principle}
	The comparison principle for one-dimensional SDEs yields
	\begin{align*}
		X^x_t\leq X^y_t
	\end{align*}
	for all $t\geq 0$ and $0\leq x\leq y$, see, e.g., \cite[Thm.~IX.3.7]{revuz-yor}.
\end{rem}

\section{Strong Approximation}\label{sec:strong}
In this section we prove strong convergence rates for suitable
one-step approximation schemes for the SDE~\eqref{eq:cir}.
At first, we introduce the notion of a one-step scheme and identify
sufficient conditions for such a scheme to be strongly convergent.

A one-step scheme for the SDE~\eqref{eq:cir} with initial value $x\geq 0$
is a sequence of approximating processes $\YN=(\YN_t)_{t\geq 0}$
for $N\in\N$ that is defined by a Borel-measurable one-step function
\begin{align*}
	\oss\colon\R^+_0\times\left]0,1\right]\times\R\to\R^+_0
\end{align*}
in the following way. The process $\YN$ is defined at the grid of mesh size
$1/N$ by
\begin{align}\label{eq:oss}
	\YN_0=x \qquad\text{and}\qquad
		\YN_{(n+1)/N} = \oss\left( \YN_{n/N}, 1/N, \dWN_{n} \right)
\end{align}
for $n\in\N_0$, where
\begin{align*}
	\dWN_n = W_{(n+1)/N} - W_{n/N}.
\end{align*}
Moreover, between two grid points we use constant interpolation, i.e.,
\begin{align}\label{eq:oss-int}
	\YN_t = \YN_{n/N}, \qquad t\in{[n/N,(n+1)/N[},
\end{align}
for $n\in\N_0$. The value of $\YN_{n/N}$ for $n\in\N$ is thus a function
of the previous value $\YN_{(n-1)/N}$, the time increment and the Brownian increment.
Clearly, Euler and Milstein-type schemes are such one-step schemes.
We refer to \cite[Sec.~2.1.4]{hj2015} for one-step schemes
in a more general setting.

In case of SDE~\eqref{eq:cir} the classical Euler-Maruyama scheme
would be given by
\begin{align*}
	\oss(x,t,w) = x + (\delta-bx)\cdot t + 2\sqrt{x}\cdot w.
\end{align*}
However, this is not well-defined since $\oss(x,t,w)$ might be negative.
Hence classical schemes need to be appropriately modified to ensure
positivity if a square root term is used.

We consider the following conditions on one-step functions for $p\in{[1,\infty[}$.

\begin{enumerate}[({A}1)]
\item\label{ass:lipschitz}
	There exists a constant $K>0$ such that
	\begin{align*}
		\left( \E{ \abs{ \oss(x_1,t,W_t)-\oss(x_2,t,W_t) }^p } \right)^{1/p}
			\leq (1+Kt)\cdot \abs{x_1-x_2}
	\end{align*}
	for all $x_1,x_2\geq0$ and $t\in\left]0,1\right]$.
\end{enumerate}
We say that a one-step function $\oss$ satisfying (A\ref{ass:lipschitz}) is
$L_p$-Lipschitz continuous.

We define $\lerr\colon\R^+_0\times\left]0,1\right]\to\R^+_0$ by
\begin{align}\label{eq:loc-milstein}
	\lerr(x,t) =
		\begin{cases}
			t, &\text{if }\ x\leq t,\\
			t^{3/2}\cdot{{x^{-1/2}}}, &\text{if }\ t\leq x\leq1,\\
			t^{3/2}\cdot x, &\text{if }\ x\geq1.
		\end{cases}
\end{align}

\begin{enumerate}[({A}1)]\setcounter{enumi}{1}
\item\label{ass:local-error}
	There exists a constant $C>0$ such that
	\begin{align*}
		\left( \E{ \abs{ \oss(x,t,W_t)-X^x_t }^p } \right)^{1/p}
			\leq C\cdot \lerr(x,t)
	\end{align*}
	for all $x\geq0$ and $t\in\left]0,1\right]$.
\end{enumerate}
In Proposition~\ref{prop:milstein} we will show that the local discretization error
of a single Milstein-step is bounded by $\lerr$.
Hence we say that a one-step function $\oss$ satisfying (A\ref{ass:local-error})
has an $L_p$-Milstein-type local discretization error for the SDE~\eqref{eq:cir}.

\begin{rem}
	Under standard Lipschitz assumptions on the coefficients of an SDE, the Euler-Maruyama method
	and the Milstein method are $L_p$-Lipschitz continuous for all $p\in{[1,\infty[}$.
	Moreover, for all $p\in{[1,\infty[}$ the Euler-Maruyama method and the Milstein method
	have a local error of order $t$ and $t^{3/2}$, respectively, see, e.g.,
	\cite[Chap.~1.1]{milstein2004}.
\end{rem}

\subsection{$L_1$-convergence}\label{sec:L1-conv}
In this section we prove strong convergence rates with respect to the $L_1$-norm
for suitable one-step schemes. The reason why we restrict ourselves to $L_1$
in this section is indicated in Section~\ref{sec:regularity}.

\begin{prop}[Average local error]\label{prop:balance}
	There exists a constant $C>0$ such that
	\begin{align*}
		\E{ \lerr\left( X^x_s, t \right) }
			\leq C \cdot (1+x)\cdot t \cdot
			\begin{cases}
				1, &\text{if }\ s\leq t,\\
				(t/s)^{\min(1,\delta)/2}\cdot \left( 1+\ln(s/t)\cdot \ind{\set{1}}(\delta) \right),
						&\text{if }\ t\leq s,
			\end{cases}
	\end{align*}
	for all $t\in{]0,1]}$, $s\in{[0,1]}$, and $x\geq0$.
\end{prop}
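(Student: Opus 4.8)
The plan is to compute the expectation $\E{\lerr(X^x_s,t)}$ directly by splitting the integration over the three regions in the definition \eqref{eq:loc-milstein} of $\lerr$, namely $\{X^x_s\leq t\}$, $\{t\leq X^x_s\leq 1\}$, and $\{X^x_s\geq 1\}$. Writing $\E{\lerr(X^x_s,t)}$ as a sum of three terms, I would have
\begin{align*}
	\E{\lerr(X^x_s,t)}
		&= t\cdot\prob(X^x_s\leq t)
			+ t^{3/2}\cdot\E{(X^x_s)^{-1/2}\cdot\ind{\set{t\leq X^x_s\leq 1}}} \\
		&\quad + t^{3/2}\cdot\E{X^x_s\cdot\ind{\set{X^x_s\geq 1}}}.
\end{align*}
The first and third terms are comparatively harmless: the third is controlled by $t^{3/2}\cdot\E{X^x_s}\preceq t^{3/2}\cdot(1+x)$ via the explicit mean \eqref{eq:mean-sol}, and since $t^{1/2}\leq 1$ this fits the required bound $t\cdot(1+x)$ in both cases $s\leq t$ and $t\leq s$. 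The first term is bounded by $t$ trivially, which already gives the $s\leq t$ case; for $t\leq s$ one must show $\prob(X^x_s\leq t)\preceq (t/s)^{\min(1,\delta)/2}$ (up to the logarithmic factor at $\delta=1$), so the tail estimate near zero is where the real work lies.

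The key technical input is a two-sided understanding of the density of $X^x_s$ near the origin, and for this I would use the monotonicity from Remark~\ref{rem:comparison-principle} together with the marginal distribution from Remark~\ref{rem:marginal}. Since $X^0_s\leq X^x_s$ pointwise, both the probability $\prob(X^x_s\leq t)$ and the negative moment $\E{(X^x_s)^{-1/2}\cdot\ind{\set{X^x_s\leq 1}}}$ are bounded above by the corresponding quantities for $X^0_s$, and by \eqref{eq:marginal} we have $X^0_s\eqdist\psi(s)\cdot Z$ with $Z$ being $\chi^2$-distributed with $\delta$ degrees of freedom. Crucially $\psi(s)\asymp s$ uniformly for $s\in{]0,1]}$, so I can replace $X^0_s$ by $s\cdot Z$ up to constants. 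The problem then reduces to estimating, for the explicit $\chi^2$ density proportional to $z^{\delta/2-1}\exp(-z/2)$ on $\R^+$, the quantities $\prob(Z\leq t/s)$ and $\E{(sZ)^{-1/2}\ind{\set{sZ\leq 1}}}$ in terms of $t/s$.

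The remaining step is a direct computation with the $\chi^2$ density. Near zero the density behaves like $z^{\delta/2-1}$, so integrating over $[0,t/s]$ gives $\prob(Z\leq t/s)\asymp (t/s)^{\delta/2}$ for $\delta<2$, which together with the trivial bound $\prob(Z\leq t/s)\leq 1$ yields the exponent $\min(1,\delta)/2$ after noting that for $\delta\geq 2$ the probability is even smaller. For the middle term, after the substitution one integrates $z^{-1/2}\cdot z^{\delta/2-1}=z^{(\delta-1)/2-1}$ against $\exp(-z/2)$; the exponent $(\delta-1)/2-1$ governs the behaviour and produces the power $(t/s)^{(\delta-1)/2}$ multiplied by $t^{3/2}s^{-1/2}$, which after simplification again matches $t\cdot(t/s)^{\min(1,\delta)/2}$. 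The borderline case $\delta=1$ is where the integral $\int z^{-1}\,\dd z$ diverges logarithmically, and this is precisely the source of the $1+\ln(s/t)$ factor guarded by $\ind{\set{1}}(\delta)$ in the statement. I expect this exact-exponent bookkeeping across the cases $\delta<1$, $\delta=1$, and $\delta>1$, and in particular the careful tracking of the logarithmic correction at $\delta=1$, to be the main obstacle; the underlying estimates are elementary once the reduction to the $\chi^2$ density via comparison and scaling is in place.
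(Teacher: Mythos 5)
Your overall strategy is the same as the paper's: dispose of the case $s\leq t$ by linear growth, reduce to $X^0_s$ via the comparison principle, use $X^0_s\eqdist\psi(s)\cdot Z$ with $\psi(s)\asymp s$, and then do the $\chi^2$ bookkeeping in the three cases $\delta<1$, $\delta=1$, $\delta>1$. However, there is a genuine gap in your treatment of the middle term, which is exactly where the work lies. You assert that $\E{ (X^x_s)^{-1/2}\cdot\ind{\set{X^x_s\leq 1}} }$ is dominated by the corresponding quantity for $X^0_s$. That comparison is indeed legitimate, since $z\mapsto z^{-1/2}\cdot\ind{\set{z\leq 1}}$ is non-increasing; but the resulting bound is vacuous, because $\E{ (X^0_s)^{-1/2}\cdot\ind{\set{X^0_s\leq 1}} }=\infty$ whenever $\delta\leq 1$: the density of $X^0_s$ near zero is of order $z^{\delta/2-1}$, so the integral $\int_0^1 z^{-1/2}\cdot z^{\delta/2-1}\,\dd z$ diverges for $\delta\leq 1$ --- precisely the regime the proposition is about. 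If instead you keep the lower truncation at $t$, i.e., work with $\E{ (X^x_s)^{-1/2}\cdot\ind{\set{t\leq X^x_s\leq 1}} }$ (which is what your later computation with lower integration limit $t/s$ presupposes, given the exponent $(t/s)^{(\delta-1)/2}$), then the comparison principle no longer applies: the function $z\mapsto z^{-1/2}\cdot\ind{\set{t\leq z\leq 1}}$ is not monotone (it vanishes below $t$ and jumps up to $t^{-1/2}$ at $t$), and $X^0_s\leq X^x_s$ gives no pointwise domination on the event $\set{X^0_s<t\leq X^x_s}$. So, as written, the key reduction from $X^x_s$ to $X^0_s$ fails for $\delta\leq1$.

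The fix is short, and it is precisely what the paper does: majorize the first two pieces of $\lerr$ jointly by the non-increasing function $U_2(z,t)=\min\left(t,\,t^{3/2}z^{-1/2}\right)$, i.e., cap $t^{3/2}z^{-1/2}$ at the value $t$ for $z\leq t$, so that $\lerr(z,t)\leq t^{3/2}z+U_2(z,t)$ for all $z\geq0$. The comparison principle then applies to $U_2(\cdot,t)$, giving $\E{U_2(X^x_s,t)}\leq\E{U_2(X^0_s,t)} = t\cdot\prob(X^0_s\leq t)+t^{3/2}\cdot\E{ (X^0_s)^{-1/2}\cdot\ind{\set{X^0_s\geq t}} }$, and the second summand is the finite, truncated $\chi^2$ moment \eqref{eq:chi2} that produces your exponents and the logarithmic factor at $\delta=1$. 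The remainder of your outline --- the case $s\leq t$ via \eqref{eq:linear-growth}, the bound $t^{3/2}\cdot\E{X^x_s}\preceq t^{3/2}\cdot(1+x)$ for the region $\set{X^x_s\geq1}$ (extended to all of $\R^+_0$), the tail estimate $\prob(Z\leq t/s)\preceq(t/s)^{\delta/2}$, and the case analysis in $\delta$ --- is correct and coincides with the paper's argument.
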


\begin{proof}
	Consider the situation of Remark~\ref{rem:marginal} and let $c>0$.
	Observe that for $\varepsilon\in\left]0,c\right]$ we have
	\begin{align}\label{eq:chi1}
		\prob(Z\leq\eps) \preceq \eps^{\delta/2}
	\end{align}
	and
	\begin{align}\label{eq:chi2}
		\E{ Z^{-1/2}\cdot \ind{\set{Z\geq\eps}} } \preceq
			\begin{cases}
				1, &\text{if }\ \delta>1,\\
				1+\ln(c/\eps), &\text{if }\ \delta=1,\\
				\eps^{(\delta-1)/2}, &\text{if }\ \delta<1.
			\end{cases}
	\end{align}
	Furthermore, we clearly have
	\begin{align}\label{eq:scaling}
		\psi(s) \asymp s
	\end{align}
	for $s\in[0,1]$.
	
	For $s\leq t$ the claim follows from \eqref{eq:linear-growth} with $p=1$.
	In the following we consider the case $t\leq s$.
	Let $U_1,U_2\colon\R^+_0\times\left]0,1\right]\to\R^+_0$ be given by
	\begin{align*}
		U_1(x,t) = t^{3/2}\cdot x
	\end{align*}
	and
	\begin{align*}
		U_2(x,t) =
		\begin{cases}
			t, &\text{if }\ x\leq t,\\
			t^{3/2}\cdot{{x^{-1/2}}}, &\text{if }\ t\leq x,
		\end{cases}
	\end{align*}
	such that
	\begin{align}\label{eq:sum}
		\lerr(x,t) \leq U_1(x,t) + U_2(x,t).
	\end{align}
	Using \eqref{eq:linear-growth} with $p=1$ we obtain
	\begin{align}\label{eq:sum1}
		\E{ U_1( X^x_s, t) }
			\preceq t^{3/2}\cdot (1+x)
	\end{align}
	for $t\in{]0,1]}$, $s\in{[0,1]}$, and $x\geq0$.
	Combining \eqref{eq:marginal}, \eqref{eq:chi1}, \eqref{eq:chi2}, and \eqref{eq:scaling} yields
	\begin{align*}
		t\cdot \prob(X^0_s\leq t) = t\cdot \prob(Z\leq t/\psi(s))
			\preceq t\cdot (t/s)^{\delta/2}
	\end{align*}
	for $0<t\leq s\leq 1$ and
	\begin{align*}
		t^{3/2}\cdot \E{ (X^0_s )^{-1/2}  \cdot \ind{ \set{X^0_s\geq t} }}
			&= t^{3/2}\cdot (\psi(s))^{-1/2}\cdot \E{ Z^{-1/2}  \cdot \ind{ \set{Z\geq t/\psi(s)} }} \\
		&\preceq \frac{t^{3/2}}{\sqrt{s}} \cdot
			\begin{cases}
				1, &\text{if }\ \delta>1, \\
				1+\ln(s/t), &\text{if }\ \delta=1, \\
				\left(t/s\right)^{(\delta-1)/2}, &\text{if }\ \delta<1,
			\end{cases}
	\end{align*}
	for $0<t\leq s\leq 1$. Moreover, due to Remark~\ref{rem:comparison-principle} and
	monotonicity of $U_2(\cdot,t)$ we have
	\begin{align}\label{eq:sum2}
		\begin{aligned}[b]
			\E{ U_2(X^x_s,t) } &\leq \E{ U_2(X^0_s,t) } \\
			&\preceq t\cdot (t/s)^{\delta/2}+t\cdot
				\begin{cases}
					(t/s)^{1/2}, &\text{if }\ \delta>1, \\
					(t/s)^{1/2}\cdot (1+\ln(s/t)), &\text{if }\ \delta=1, \\
					(t/s)^{\delta/2}, &\text{if }\ \delta<1,
				\end{cases}
		\end{aligned}
	\end{align}
	for $0<t\leq s\leq 1$ and $x\geq0$.
	Combining \eqref{eq:sum}, \eqref{eq:sum1}, and \eqref{eq:sum2} completes the proof.
\end{proof}

\begin{thm}[$L_1$-convergence of one-step schemes]\label{thm:strong-L1}
	Let $\YN$ be a one-step scheme given by \eqref{eq:oss} and \eqref{eq:oss-int},
	and assume that (A\ref{ass:lipschitz}) and (A\ref{ass:local-error})
	are fulfilled for $p=1$. Then there exists a constant $C>0$ such that
	\begin{align*}
		\sup_{0\leq t\leq 1}\E{ \abs{X^x_t-\YN_t} } \leq C\cdot (1+x)\cdot
			\frac{ 1+\ind{\set{1}}(\delta)\cdot\ln N }{ N^{\min(1,\delta)/2} }
	\end{align*}
	for all $N\in\N$ and for all $x\geq 0$.
\end{thm}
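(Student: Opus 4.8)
The plan is to establish the bound at the grid points $n/N$ first and then extend it to all $t\in[0,1]$. Writing $h=1/N$ and $\epsilon_n=\E{\abs{X^x_{n/N}-\YN_{n/N}}}$, the heart of the argument is the one-step decomposition
\[
	X^x_{(n+1)/N}-\YN_{(n+1)/N}
	= \bigl(X^x_{(n+1)/N}-\oss(X^x_{n/N},h,\dWN_n)\bigr)
	+ \bigl(\oss(X^x_{n/N},h,\dWN_n)-\oss(\YN_{n/N},h,\dWN_n)\bigr),
\]
in which the local error (first bracket) is evaluated at the \emph{exact} solution value $X^x_{n/N}$ rather than at the approximation $\YN_{n/N}$. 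This is precisely the quantity that Proposition~\ref{prop:balance} controls, and routing the local error through the true solution in this way is the decisive step.

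To estimate the two brackets I would condition on $\F_{n/N}$ and use that $\dWN_n$ is independent of $\F_{n/N}$. By time-homogeneity and the flow property of SDE~\eqref{eq:cir}, conditionally on $\F_{n/N}$ the pair $(\dWN_n,X^x_{(n+1)/N})$ satisfies $(\dWN_n,X^x_{(n+1)/N})\eqdist(W_h,X^{y}_h)$ with $y=X^x_{n/N}$ held fixed; hence (A\ref{ass:local-error}) gives $\E{\abs{X^x_{(n+1)/N}-\oss(X^x_{n/N},h,\dWN_n)}\mid\F_{n/N}}\leq C\cdot\lerr(X^x_{n/N},h)$. Since $X^x_{n/N}$ and $\YN_{n/N}$ are $\F_{n/N}$-measurable, (A\ref{ass:lipschitz}) applied conditionally bounds the second bracket by $(1+Kh)\abs{X^x_{n/N}-\YN_{n/N}}$. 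Taking expectations and using $\epsilon_0=0$ leads to the recursion
\[
	\epsilon_{n+1}\leq(1+Kh)\,\epsilon_n + C\cdot\E{\lerr(X^x_{n/N},h)}.
\]

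Iterating and using $(1+Kh)^N\leq e^{K}$ (which holds because $Nh=1$) gives $\epsilon_n\leq e^{K}C\sum_{k=0}^{n-1}\E{\lerr(X^x_{k/N},h)}$. Inserting Proposition~\ref{prop:balance} with $s=k/N$ and $t=h$ — the case $s\leq t$ for $k=0$ and $s\geq t$ with $h/s=1/k$ for $k\geq1$ — bounds the sum by
\[
	C\cdot(1+x)\cdot h\cdot\Bigl[\,1+\sum_{k=1}^{N}k^{-\min(1,\delta)/2}\bigl(1+\ln k\cdot\ind{\set{1}}(\delta)\bigr)\Bigr].
\]
Setting $\beta=\min(1,\delta)/2\in{]0,1/2]}$, the elementary estimates $\sum_{k=1}^{N}k^{-\beta}\asymp N^{1-\beta}$ and $\sum_{k=1}^{N}k^{-1/2}\ln k\asymp N^{1/2}\ln N$ together with $h=1/N$ turn the bracket into $N^{-\min(1,\delta)/2}$, with the extra factor $\ln N$ appearing exactly when $\delta=1$. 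This yields the claimed bound for $\sup_{0\leq n\leq N}\epsilon_n$.

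Finally, since $\YN$ is piecewise constant, for $t\in{[n/N,(n+1)/N[}$ I would split $\E{\abs{X^x_t-\YN_t}}\leq\E{\abs{X^x_t-X^x_{n/N}}}+\epsilon_n$ and bound the modulus of continuity of the exact solution by $\E{\abs{X^x_t-X^x_{n/N}}}\leq C(1+x)\abs{t-n/N}^{1/2}\leq C(1+x)N^{-1/2}$, using It\^o's isometry together with \eqref{eq:linear-growth}; as $\min(1,\delta)/2\leq1/2$ this is dominated by $N^{-\min(1,\delta)/2}$ and the proof concludes. I expect the main obstacle to be the first two paragraphs: choosing the decomposition so that the local error sits at the true solution, and carrying out the conditioning rigorously. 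This is also where the restriction to $L_1$ is forced, since Proposition~\ref{prop:balance} controls only the first moment of $\lerr(X^x_s,\cdot)$ — higher moments blow up near the boundary because of the negative power $x^{-1/2}$ in \eqref{eq:loc-milstein}. The remaining summation and Gr\"onwall steps are routine.
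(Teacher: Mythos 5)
Your proposal is correct and follows essentially the same route as the paper: the identical one-step decomposition with the local error anchored at the exact solution, conditioning on $\F_{n/N}$ to apply (A\ref{ass:local-error}) and (A\ref{ass:lipschitz}), the recursion $\epsilon_{n+1}\leq(1+K/N)\epsilon_n+C\,\E{\lerr(X^x_{n/N},1/N)}$ resolved via $(1+K/N)^N\leq e^K$ and Proposition~\ref{prop:balance}, the summation $\sum_{k=1}^N k^{-\min(1,\delta)/2}(1+\ln k\cdot\ind{\set{1}}(\delta))$, and finally the $L_1$-modulus of continuity $\E{\abs{X^x_t-X^x_{n/N}}}\preceq(1+x)N^{-1/2}$ to pass from the grid to all of $[0,1]$. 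The only cosmetic differences are that you iterate the recursion before inserting the average local error bound (the paper does it in the opposite order) and that you justify the time regularity of the solution directly via It\^o's isometry and \eqref{eq:linear-growth} rather than citing \cite[Thm.~2.4.3]{mao}; both are immaterial.
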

\begin{proof}
	For notational convenience, we set
	\begin{align*}
		\XN_n = X^x_{n/N} \qquad\text{and}\qquad
		\YhatN_n = \YN_{n/N}
	\end{align*}
	for $n=0,\ldots,N$. Furthermore, we define
	\begin{align*}
		\eN_n = \E{ \abs{\XN_n-\YhatN_n} }
	\end{align*}
	for $n=0,\ldots,N$.
	Then we have $\eN_0=0$ and
	\begin{align*}
		\eN_{n+1} &\leq \E{ \abs{
				\XN_{n+1} - \oss\left( \XN_n, 1/N, \dWN_{n} \right)
			} } \\
		&\qquad+ \E{ \abs{
				\oss(\XN_n,1/N,\dWN_{n}) - \YhatN_{n+1} 
			} } \\
		&= \E { \E{ \abs{
				X_{1/N}^{\tilde{x}} - \oss\left( \tilde{x}, 1/N, W_{1/N} \right)
			} }_{\tilde{x}=\XN_n} } \\
		&\qquad+ \E{ \E{ \abs{
				\oss(\tilde{x},1/N,W_{1/N}) - \oss(\tilde{y},1/N,W_{1/N}) 
			} }_{(\tilde{x},\tilde{y})=(\XN_n,\YhatN_n)} }
	\end{align*}
	for $n=0,\ldots,N-1$. Using (A\ref{ass:local-error}) and (A\ref{ass:lipschitz})
	with $p=1$ we obtain
	\begin{align*}
		\eN_{n+1} \leq C_1\cdot \E{ \lerr(\XN_n,1/N) }
			+ \left( 1+K/N \right)\cdot \E{ \abs{\XN_n-\YhatN_n} }.
	\end{align*}
	Moreover, applying Proposition~\ref{prop:balance} yields
	\begin{align*}
		\eN_{n+1} &\leq \left( 1+K/N \right)\cdot \eN_n \\
		&\qquad+ C_1C_2\cdot (1+x) \cdot \frac{1}{N}\cdot
			\begin{cases}
				1, &\text{if }\ n=0,\\
				n^{-\min(1,\delta)/2}\cdot \left( 1+\ln(n)\cdot \ind{\set{1}}(\delta) \right),
						&\text{if }\ n\geq1,
			\end{cases}
	\end{align*}
	and hence
	\begin{align*}
		\eN_{n+1} \leq \left( 1+K/N \right)\cdot \eN_n
			+ 2\,C_1C_2\cdot (1+x) \cdot \frac{1}{N}\cdot
			\frac{ 1+\ln(N)\cdot\ind{\set{1}}(\delta) }{ (n+1)^{\min(1,\delta)/2} }
	\end{align*}
	for $n=0,\ldots,N-1$. Recursively, we get
	\begin{align*}
		\eN_{n} &\leq 2\,C_1C_2\cdot (1+x) \cdot \frac{1}{N}
			\cdot\left( 1+\ln(N)\cdot \ind{\set{1}}(\delta) \right)
			\cdot \sum_{k=1}^n \frac{ \left( 1+K/N \right)^{n-k} }{ k^{\min(1,\delta)/2} }
	\end{align*}
	and hence
	\begin{align*}
		\eN_{n} &\leq 2\,C_1C_2\,e^{K}\cdot (1+x) 
			\cdot\left( 1+\ln(N)\cdot \ind{\set{1}}(\delta) \right)
			\cdot \frac{1}{N}\sum_{k=1}^N k^{-\min(1,\delta)/2} \\
		&\leq 4\,C_1C_2\,e^{K}\cdot (1+x)
			\cdot\frac{ 1+\ln(N)\cdot \ind{\set{1}}(\delta) }{ N^{\min(1,\delta)/2} }
	\end{align*}
	for $n=0,\ldots,N$. This yields
	\begin{align}\label{eq:rate-grid}
		\max_{n=0,\ldots,N}\E{ \abs{\XN_n-\YhatN_n} } \leq 4\,C_1C_2\,e^{K}\cdot (1+x)\cdot
			\frac{ 1+\ind{\set{1}}(\delta)\cdot\ln N }{ N^{\min(1,\delta)/2} }
	\end{align}
	for all $N\in\N$ and for all $x\geq0$.
	Since the coefficients of SDE~\eqref{eq:cir} satisfy a linear
	growth condition we have
	\begin{align}\label{eq:sol-time-regularity}
		\E{ \abs{ X^x_s-X^x_t } } \preceq (1+x)\cdot\sqrt{\abs{s-t}}
	\end{align}
	for all $x\geq0$ and $s,t\in[0,1]$, see, e.g., \cite[Thm.~2.4.3]{mao}.
	Combining \eqref{eq:rate-grid} and \eqref{eq:sol-time-regularity}
	completes the proof.
\end{proof}

\begin{rem}
	Consider the proof of Theorem~\ref{thm:strong-L1}.
	An analysis of the global error by adding local errors is a
	classical technique for ordinary differential equations.
	For such a technique in the context of SDEs under standard
	Lipschitz assumptions we refer to \cite[Chap.~1.1]{milstein2004}.
	In case of SDE~\eqref{eq:cir}, it is crucial to control the average local
	error, see Proposition~\ref{prop:balance}.
\end{rem}

\subsection{$L_p$-convergence}\label{sec:Lp-conv}
In this section we extend the result from Theorem~\ref{thm:strong-L1}
to arbitrary $p>1$ by using interpolation of $L_p$-spaces. For this
we need the following additional assumption on a one-step scheme.
\begin{enumerate}[({A}1)]\setcounter{enumi}{2}
\item\label{ass:bounded}
	For every $1\leq q<\infty$ there exists a constant $C>0$ such that
	\begin{align*}
		\sup_{0\leq t\leq 1} \left( \E{ \abs{ \YN_t }^q } \right)^{1/q}
			\leq C\cdot (1+x)
	\end{align*}
	for all $x\geq0$ and $N\in\N$.
\end{enumerate}
We say that a one-step scheme $\YN$ satisfying (A\ref{ass:bounded})
is uniformly bounded.

\begin{rem}
	Under standard linear growth conditions on the coefficients of the SDE,
	the Euler-Maruyama method and the Milstein method are uniformly bounded,
	see, e.g., \cite[Lem.~2.7.1]{mao} for the Euler-Maruyama method and $q=2$.
\end{rem}

\begin{rem}[Interpolation of $L_p$-spaces]\label{rem:interpolation}
	Let $1\leq p<\infty$ and $0<\eps<1/p$. Set
	\begin{align}\label{eq:def-q}
		q = 1+\frac{1-1/p}{\eps}.
	\end{align}
	Note that $p\leq q<\infty$. An application of H\"older's inequality yields
	\begin{align*}
		\left( \E{ \abs{Z}^p } \right)^{1/p}
			= \left( \E{ \abs{Z}^{q\cdot(\eps p)}\cdot\abs{Z}^{(1/p-\eps)\cdot p} } \right)^{1/p}
			\leq \left( \E{ \abs{Z}^q } \right)^{\eps}
				\cdot \left( \E{ \abs{Z} } \right)^{1/p-\eps}
	\end{align*}
	for all random variables $Z$.
\end{rem}

\begin{cor}[$L_p$-convergence of one-step schemes]\label{cor:p}
	Consider the situation of Theorem~\ref{thm:strong-L1} and assume in addition that
	(A\ref{ass:bounded}) is fulfilled. Furthermore, let $1\leq p<\infty$ and $\eps>0$.
	Then there exists a constant $C>0$ such that
	\begin{align*}
		\sup_{0\leq t\leq 1}\left( \E{ \abs{X^x_t-\YN_t}^p } \right)^{1/p}
			\leq C\cdot(1+x)\cdot \frac{ 1}{ N^{\min(1,\delta)/(2p)-\eps} }
	\end{align*}
	for all $N\in\N$ and for all $x\geq0$.
\end{cor}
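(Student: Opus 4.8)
The plan is to combine the $L_1$-convergence rate from Theorem~\ref{thm:strong-L1} with the uniform boundedness assumption (A\ref{ass:bounded}) via the interpolation inequality of Remark~\ref{rem:interpolation}. First I would fix $1\leq p<\infty$ and $\eps>0$. If $\eps\geq 1/p$ the desired exponent $\min(1,\delta)/(2p)-\eps$ is non-positive, so the bound becomes trivial (the right-hand side grows in $N$); hence I may assume without loss of generality that $0<\eps<1/p$, which is exactly the range required for Remark~\ref{rem:interpolation}. With this $\eps$ I define $q$ as in \eqref{eq:def-q}, so that $p\leq q<\infty$.

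Next, for each fixed $t\in[0,1]$ I would apply the interpolation inequality of Remark~\ref{rem:interpolation} to the random variable $Z=X^x_t-\YN_t$, giving
\begin{align*}
	\left( \E{ \abs{X^x_t-\YN_t}^p } \right)^{1/p}
		\leq \left( \E{ \abs{X^x_t-\YN_t}^q } \right)^{\eps}
			\cdot \left( \E{ \abs{X^x_t-\YN_t} } \right)^{1/p-\eps}.
\end{align*}
The second factor is controlled by Theorem~\ref{thm:strong-L1}: taking the supremum over $t$, it is at most a constant times $(1+x)$ times $(1+\ind{\set{1}}(\delta)\ln N)/N^{\min(1,\delta)/2}$, all raised to the power $1/p-\eps$. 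For the first factor I would bound the $L_q$-distance crudely by the triangle inequality, $\norm{X^x_t-\YN_t}_{L_q}\leq \norm{X^x_t}_{L_q}+\norm{\YN_t}_{L_q}$, and then use Remark~\ref{rem:linear-growth} (with $p=q$) to bound $\norm{X^x_t}_{L_q}$ and assumption (A\ref{ass:bounded}) to bound $\norm{\YN_t}_{L_q}$, both uniformly in $t$ by a constant times $(1+x)$.

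Putting the two factors together, the prefactor in $(1+x)$ combines to $(1+x)^{\eps}\cdot(1+x)^{1/p-\eps}=(1+x)^{1/p}\leq (1+x)$, which is of the required form. The $N$-dependence comes entirely from the second factor and equals
\begin{align*}
	\left( \frac{1+\ind{\set{1}}(\delta)\cdot \ln N}{N^{\min(1,\delta)/2}} \right)^{1/p-\eps}.
\end{align*}
The exponent of $N$ is $-\min(1,\delta)/2\cdot(1/p-\eps)=-\min(1,\delta)/(2p)+\min(1,\delta)\eps/2$, and the logarithmic factor (present only when $\delta=1$) grows slower than any positive power of $N$. The main technical point, and the only place where care is needed, is to absorb both the loss of $\min(1,\delta)\eps/2$ in the exponent and the logarithmic factor into a single arbitrarily small polynomial slack: I would note that for any $\eps'>0$ one has $\ln N\preceq N^{\eps'}$, so by choosing the interpolation parameter slightly smaller than the given $\eps$ one can ensure the combined $N$-dependence is bounded by $N^{-(\min(1,\delta)/(2p)-\eps)}$. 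Since $\eps>0$ is arbitrary this reparametrization is harmless, and collecting all $N$- and $x$-independent factors into a single constant $C$ completes the proof.
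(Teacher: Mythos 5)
Your proposal is correct and follows essentially the same route as the paper: interpolate the $L_p$-error between the $L_1$-rate of Theorem~\ref{thm:strong-L1} and uniform $L_q$-moment bounds obtained from \eqref{eq:linear-growth} and (A\ref{ass:bounded}), with $q$ as in \eqref{eq:def-q}; your explicit absorption of the $\delta=1$ logarithm and of the exponent loss $\min(1,\delta)\eps/2$ by slightly shrinking the interpolation parameter is exactly the step the paper leaves tacit in its final sentence, and your reduction to $\eps<1/p$ matches the paper's ``we may assume'' (note only that triviality of the case $\eps\geq 1/p$ itself uses the uniform moment bounds \eqref{eq:linear-growth} and (A\ref{ass:bounded}) to control the left-hand side independently of $N$). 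One bookkeeping slip: the interpolation factor is $\left(\E{\abs{Z}^q}\right)^{\eps}=\left(\left(\E{\abs{Z}^q}\right)^{1/q}\right)^{q\eps}$, i.e.\ the $L_q$-\emph{norm} enters to the power $q\eps$, not $\eps$, so after the triangle inequality the prefactor is $(1+x)^{q\eps}$ rather than your $(1+x)^{\eps}$; combined with $(1+x)^{1/p-\eps}$ from the $L_1$-factor this yields $(1+x)^{1/p-\eps+q\eps}=(1+x)$ --- exactly the target, via the identity $1/p-\eps+q\eps=1$ that the paper highlights --- rather than your claimed $(1+x)^{1/p}$. The conclusion is unaffected.
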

\begin{proof}
We may assume $\eps<1/p$. According to Remark~\ref{rem:interpolation}
	we have
	\begin{align*}
		\sup_{0\leq t\leq 1} &\left( \E{ \abs{X^x_t-\YN_t}^p } \right)^{1/p} \\
		&\leq \sup_{0\leq t\leq 1}\left( \E{ \abs{X^x_t-\YN_t} } \right)^{1/p-\eps} \\
		&\qquad\qquad\cdot \left(
				\sup_{0\leq t\leq 1}\left( \E{ \abs{X^x_t}^q } \right)^{1/q}
				+ \sup_{0\leq t\leq 1}\left( \E{ \abs{\YN_t}^q } \right)^{1/q}
			\right)^{q\cdot\eps}
	\end{align*}
	with $q$ given by \eqref{eq:def-q}. Using \eqref{eq:linear-growth} and (A\ref{ass:bounded})
	we get
	\begin{align*}
		\sup_{0\leq t\leq 1} &\left( \E{ \abs{X^x_t-\YN_t}^p } \right)^{1/p} 
			\preceq \sup_{0\leq t\leq 1}\left( \E{ \abs{X^x_t-\YN_t} } \right)^{1/p-\eps} 
			\cdot \left(1+x\right)^{q\cdot \eps}.
	\end{align*}
	It remains to apply Theorem~\ref{thm:strong-L1} and to observe that $1/p-\eps+q\eps=1$.
\end{proof}

\subsection{Regularity in the initial value}\label{sec:regularity}
In this section we study continuity properties of the solution of
SDE~\eqref{eq:cir} in the initial value. It turns out that the solution
is in general not locally Lipschitz continuous in the initial value
with respect to the $L_p$-norm if $p>1$.
This is the reason why we have restricted ourselves
in Section~\ref{sec:L1-conv} to the case $p=1$.
For results on local Lipschitz continuity in the initial value for more
general SDEs we refer to \cite{cox-hj}. However, in case of SDE~\eqref{eq:cir}
these results are restricted to $\delta\geq2$.

The following lemma implies that the solution of SDE~\eqref{eq:cir}
is Lipschitz continuous in the initial value with respect to the $L_1$-norm
on any compact time interval.
\begin{lem}\label{lem:initial-value}
	We have
	\begin{align*}
		\E{ \abs{X^x_t-X^y_t} } = e^{-bt}\cdot\abs{x-y}
	\end{align*}
	for all $x,y\geq0$ and for all $t\geq0$.
\end{lem}
\begin{proof}
	Let $x\geq y\geq0$. According to Remark~\ref{rem:comparison-principle} we have
	\begin{align*}
		\E{ \abs{X^x_t-X^y_t} } = \E{ X^x_t-X^y_t } = \E{ X^x_t } - \E{X^y_t }
	\end{align*}
	for all $t\geq0$. Using \eqref{eq:mean-sol} completes the proof.
\end{proof}

\begin{rem}
	The proof of Lemma~\ref{lem:initial-value}
	is a general technique to obtain $L_1$-Lipschitz continuity
	in the initial value for a large class of one-dimensional SDEs.
	The comparison principle reduces this problem to the computation
	of expected values.
\end{rem}

\begin{ex}[One-dimensional squared Bessel process]\label{ex:bessel}
	In \cite{bessel1}, it was shown that for $\delta=1$ and $b=0$ we have
	\begin{align*}
		X^x_t = \left( W_t+\sqrt{x} - \min\left(
				0, \inf_{0\leq s\leq t} W_s+\sqrt{x}
			\right) \right)^2
	\end{align*}
	for $t\geq0$ and $x\geq 0$. This yields
	\begin{align*}
		\abs{ X_t^x-X_t^0 } =
			\begin{cases}
				0, &\text{if }\inf_{0\leq s\leq t} W_s+\sqrt{x}\leq 0,\\
				(W_t+\sqrt{x})^2-(W_t-\inf_{0\leq s\leq t} W_s)^2,
					&\text{if }\inf_{0\leq s\leq t} W_s+\sqrt{x}\geq 0.
			\end{cases}
	\end{align*}
	Using this one can show
	\begin{align*}
		\left(\E{ \abs{X_t^x-X_t^0}^p }\right)^{1/p} \asymp x^{(1+1/p)/2}
	\end{align*}
	for $x\in[0,1]$, where $1\leq p<\infty$ and $t>0$.
\end{ex}

In the rest of this section we assume
\begin{align*}
	0<\delta<2 \qquad\text{and}\qquad b=0,
\end{align*}
i.e., the solution of SDE~\eqref{eq:cir} is a squared Bessel process of dimension $\delta$.
For $1\leq p<\infty$ define the maximal H\"older exponent by
\begin{align*}
	\exin(\delta,p) = \sup\left\{\alpha\geq 0:\ \exists C>0\ \ \forall x\in \left]0,1\right]:
			\ \left(\E{ \abs{X^x_1-X^0_1}^p }\right)^{1/p} \leq C\cdot x^\alpha
		\right\}.
\end{align*}
Note that replacing the time point $t=1$ in the definition of $\exin$
to an arbitrary time point $t>0$ does not affect the value of $\exin$,
see Remark~\ref{rem:scaling-space} and Remark~\ref{rem:scaling-time}.
From Lemma~\ref{lem:initial-value} and Example~\ref{ex:bessel} we already have
\begin{align}\label{eq:exin-1}
	\exin(\delta,1) = 1
\end{align}
and
\begin{align}\label{eq:exin-2}
	\exin(1,p) = \left(1+1/p\right)/2.
\end{align}

\begin{prop}\label{prop:initial-value}
	We have
	\begin{align}\label{eq:exin}
		1/p \leq \exin(\delta,p) \leq 1/p+\delta/2-\delta/(2p).
	\end{align}
	In particular, we have $\exin(\delta,p)<1$ if and only if $p>1$.
\end{prop}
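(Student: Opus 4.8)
The plan is to work in the standing regime of this subsection, $0<\delta<2$ and $b=0$, and to study the difference $D_t := X^x_t-X^0_t$, which is nonnegative by the comparison principle (Remark~\ref{rem:comparison-principle}) and satisfies $\E{D_t}=x$ by Lemma~\ref{lem:initial-value}. Both inequalities in \eqref{eq:exin} are statements about the decay of $\E{\abs{X^x_1-X^0_1}^p}=\E{D_1^p}$ as $x\downarrow0$: the lower bound $\exin(\delta,p)\ge1/p$ amounts to $\E{D_1^p}\preceq x$, while the upper bound amounts to a matching lower estimate $x^{1+\delta(p-1)/2}\preceq\E{D_1^p}$.

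For the lower bound I would exploit that, since $b=0$, the drifts cancel and $D$ is a martingale with $\dd D_t = 2(\sqrt{X^x_t}-\sqrt{X^0_t})\,\dd W_t$, so that $\dd\langle D\rangle_t = 4(\sqrt{X^x_t}-\sqrt{X^0_t})^2\,\dd t\le 4D_t\,\dd t$ by $(\sqrt a-\sqrt b)^2\le\abs{a-b}$. Applying It\^o's formula to $D_t^p$ for $p\ge2$ (after a standard localization to justify integrability) and taking expectations yields the recursion
\begin{align*}
	\E{D_t^p} \le x^p + 2p(p-1)\int_0^t \E{D_s^{p-1}}\,\dd s.
\end{align*}
Starting from $\E{D_t}=x$ and $\E{D_t^2}\le x^2+4xt\preceq x$ (for $x\in{]0,1]}$), H\"older interpolation settles $\E{D_t^p}\preceq x$ for $p\in[1,2]$, and the recursion then propagates $\E{D_t^p}\preceq x$ to all $p\ge1$ by induction in unit steps. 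Hence $(\E{D_1^p})^{1/p}\preceq x^{1/p}$, i.e.\ $\exin(\delta,p)\ge1/p$.

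For the upper bound I would first record the reverse-H\"older inequality
\begin{align*}
	x = \E{D_1} = \E{D_1\cdot\ind{\set{D_1>0}}}
		\le \left(\E{D_1^p}\right)^{1/p}\cdot \prob(D_1>0)^{1-1/p},
\end{align*}
that is, $\E{D_1^p}\ge x^p\cdot\prob(D_1>0)^{-(p-1)}$. It therefore suffices to show $\prob(D_1>0)\preceq x^{1-\delta/2}$. The key observation is coalescence: by comparison $X^x\ge X^0\ge0$, so if $X^x$ hits $0$ at some time $s\le1$ then $X^0_s=0$ as well, and pathwise uniqueness forces $X^x\equiv X^0$ afterwards; thus $\set{D_1>0}\subseteq\set{\tau_0^x>1}$, where $\tau_0^x$ is the first hitting time of $0$ by $X^x$. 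The crux is then the tail estimate $\prob(\tau_0^x>1)\preceq x^{1-\delta/2}$ for the squared Bessel process, which I would obtain from the explicit law of the hitting time of $0$ (equivalently from Brownian scaling $\tau_0^x\eqdist x\,\tau_0^1$ together with finiteness of the fractional moments $\E{(\tau_0^1)^\beta}$ for $\beta<1-\delta/2$ and Markov's inequality, letting $\beta\uparrow1-\delta/2$). Substituting gives $x^{1+\delta(p-1)/2}\preceq\E{D_1^p}$, which rules out any exponent $\alpha>1/p+\delta/2-\delta/(2p)$ in the definition of $\exin$, so $\exin(\delta,p)\le1/p+\delta/2-\delta/(2p)$.

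Finally, the ``in particular'' statement follows by inspecting $\alpha^*:=1/p+\delta/2-\delta/(2p)$: for $p>1$ one has $\alpha^*<1$ precisely because $\delta<2$, so $\exin(\delta,p)\le\alpha^*<1$; for $p=1$ both bounds collapse to $1$, matching \eqref{eq:exin-1}, so $\exin(\delta,1)=1$. Hence $\exin(\delta,p)<1$ if and only if $p>1$. I expect the main obstacle to be the hitting-time tail estimate $\prob(\tau_0^x>1)\preceq x^{1-\delta/2}$ (together with the coalescence/pathwise-uniqueness argument feeding it), whereas the martingale moment recursion underlying the lower bound is routine.
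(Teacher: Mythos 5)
Your proposal is correct, and it splits into one half that mirrors the paper and one that genuinely departs from it. For the upper bound you are doing essentially what the paper does: the paper applies H\"older to $\E{ \abs{X^x_1-X^0_1}\cdot\ind{\set{\forall t\in[0,1]:X^x_t>0}} }$, drops the indicator by exactly your coalescence observation (if $X^x$ hits zero then $X^0$ is squeezed to zero there and pathwise uniqueness merges the paths, an argument the paper leaves implicit but which you correctly make explicit), and then invokes $\prob(\forall t\in[0,1]:X^x_t>0)\asymp x^{1-\delta/2}$ from the known hitting-time density of Bessel processes (\cite[p.~75]{handbook}); your reorganization via $\set{D_1>0}\subseteq\set{\tau_0^x>1}$ with the scaling $\tau_0^x\eqdist x\,\tau_0^1$ is the same mechanism, and the $\eps$-loss in your Markov-inequality route (only $x^\beta$ for each $\beta<1-\delta/2$) is harmless because $\exin$ is defined as a supremum, so the family of exponents $p-\beta(p-1)$ with $\beta\uparrow 1-\delta/2$ pins down the same bound. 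For the lower bound you take a genuinely different route: the paper gets $\exin(\delta,p)\geq 1/p$ in one line by interpolating (Remark~\ref{rem:interpolation}) between the $L_1$-Lipschitz bound of Lemma~\ref{lem:initial-value} and the uniform moment bound \eqref{eq:linear-growth}, which yields $x^{1/p-\eps}$ for every $\eps>0$, whereas you run a Yamada--Watanabe-type moment recursion on the nonnegative difference $D=X^x-X^0$ (a local martingale since $b=0$; note you call it a martingale, but nothing is lost since $\E{D_t}=x$ comes from Lemma~\ref{lem:initial-value} anyway), using $(\sqrt{a}-\sqrt{b})^2\leq\abs{a-b}$ to get $\dd\langle D\rangle_t\leq 4D_t\,\dd t$ and then induction in unit steps of $p$. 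Your route is longer but slightly stronger and more self-contained: it gives the clean bound $\E{D_1^p}\preceq x$, i.e.\ the exponent $1/p$ with no $\eps$-loss, while the paper's interpolation trick is shorter but only delivers $1/p-\eps$; both suffice for \eqref{eq:exin} since only the supremum matters. The routine steps you defer (localization for the It\^o argument, justified by \eqref{eq:linear-growth} and dominated convergence; the base cases $p=1,2$ and H\"older interpolation on $[1,2]$) all go through, and your handling of the ``in particular'' claim — $1/p+\delta/2-\delta/(2p)<1$ for $p>1$ because $\delta<2$, with collapse to $1$ at $p=1$ via \eqref{eq:exin-1} — is the same computation the paper intends.
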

\begin{proof}
	If $p=1$, then \eqref{eq:exin} follows from \eqref{eq:exin-1}.
	Note that for $x\in\left]0,1\right]$ we have
	\begin{align}\label{eq:prob-positive}
		\prob\left( \forall t\in[0,1]:\ X_{t}^{x}>0 \right)
			\asymp x^{1-\delta/2}.
	\end{align}
	This follows from \cite[p.~75]{handbook},
	where the density of the first hitting time of zero is given for Bessel processes.
	Let $1<p<\infty$ and let $1<q<\infty$ be the dual of $p$, i.e.,
	$1/p+1/q=1$. Using H\"older's inequality, Remark~\ref{rem:comparison-principle},
	Lemma~\ref{lem:initial-value}, and \eqref{eq:prob-positive} we get
	\begin{align*}
		\left(\E{ \abs{X^x_1-X^0_1}^p }\right)^{1/p}
			&\geq \E{ \abs{X^x_1-X^0_1}\cdot\ind{\set{\forall t\in[0,1]:X^x_t>0}} }
			\cdot \big(\, \prob\left(\forall t\in[0,1]:X^x_t>0\right) \big)^{-1/q} \\
		&= \E{ \abs{X^x_1-X^0_1} }
			\cdot \big(\, \prob\left(\forall t\in[0,1]:X^x_t>0\right) \big)^{-1/q} \\
		&\asymp x\cdot x^{-1/q\cdot (1-\delta/2)}
	\end{align*}
	for $x\in\left]0,1\right]$,
	which shows the upper bound in \eqref{eq:exin}. The lower bound follows by
	interpolation using Remark~\ref{rem:interpolation}, Lemma~\ref{lem:initial-value}
	and \eqref{eq:linear-growth}, cf.~the proof of Corollary~\ref{cor:p}.
\end{proof}

\begin{rem}
	Let us mention that the upper bound in \eqref{eq:exin} is sharp
	for $p=1$ and $\delta=1$, see \eqref{eq:exin-1} and \eqref{eq:exin-2}.
\end{rem}

\section{Tamed Milstein Scheme}\label{sec:scheme}
In this section we introduce a truncated Milstein scheme and prove
that this scheme satisfies the assumptions of
Theorem~\ref{thm:strong-L1} and Corollary~\ref{cor:p}.

Consider $\mil\colon\R^+_0\times\left]0,1\right]\times\R\to\R$ given by
\begin{align*}
	\mil(x,t,w) &= x + (\delta-bx)\cdot t + 2\sqrt{x}\cdot w
		+ \left(w^2-t\right) \\
	&= \left( \sqrt{x}+w \right)^2
		+ \left( \delta-1-bx \right)\cdot t.
\end{align*}
This function models a single Milstein-step. Moreover, it only
preserves positivity if $b\leq0$ and $\delta\geq1$.
Hence it is not a valid one-step scheme in general.
However, the analysis of $\mil$ is an important step since we will
construct valid one-step schemes that are close to $\mil$.

\begin{prop}[Error of a Milstein-step]\label{prop:milstein}
	For every $1\leq p<\infty$ there exists a constant $C>0$ such that
	\begin{align*}
		\left( \E{ \abs{ \mil(x,t,W_t)-X^x_t }^p } \right)^{1/p}
			\leq C\cdot \lerr(x,t)
	\end{align*}
	for all $x\geq0$ and $t\in\left]0,1\right]$, where $\lerr$ is
	given by \eqref{eq:loc-milstein}.
\end{prop}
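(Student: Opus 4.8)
The starting point is a clean stochastic representation of the one-step error. Expanding $\mil(x,t,W_t)=x+(\delta-bx)t+2\sqrt{x}\,W_t+(W_t^2-t)$ and inserting the Itô identity $W_t^2-t=2\int_0^t W_s\,\dd W_s$ together with the integral form of SDE~\eqref{eq:cir}, the initial values and the $\delta t$-terms cancel and one is left with
\[
	\mil(x,t,W_t)-X^x_t = b\int_0^t\left(X^x_s-x\right)\dd s + 2\int_0^t R_s\,\dd W_s,
\]
where $R_s=\sqrt{x}+W_s-\sqrt{X^x_s}$. Thus the whole task reduces to estimating these two terms with respect to the $L_p$-norm, which I abbreviate by $\norm{\cdot}_p$; by monotonicity of $L_p$-norms on a probability space we may assume $p\geq2$.

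The drift term is harmless. Using $\norm{X^x_s-x}_p\preceq(1+x)\sqrt{s}$ (which follows as in \eqref{eq:sol-time-regularity} from the linear growth of the coefficients) and Minkowski's integral inequality, its $L_p$-norm is $\preceq(1+x)\,t^{3/2}$, and one checks directly that $(1+x)\,t^{3/2}\preceq\lerr(x,t)$ in each of the three regimes of \eqref{eq:loc-milstein}. For the stochastic term the Burkholder-Davis-Gundy inequality together with Minkowski's inequality in $L_{p/2}$ gives $\norm{\int_0^t R_s\,\dd W_s}_p\preceq\left(\int_0^t\norm{R_s}_p^2\,\dd s\right)^{1/2}$, so everything comes down to a sharp bound on $\norm{R_s}_p$ for $0<s\leq t$.

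I would split the bound on $\norm{R_s}_p$ according to the size of $x$. For $x\leq t$ the crude estimate $\norm{R_s}_p\leq\sqrt{x}+\norm{W_s}_p+\norm{\sqrt{X^x_s}}_p\preceq\sqrt{x+s}$ — using the local moment bound $\E{(X^x_s)^q}\preceq(x+s)^q$ read off from the marginal law of Remark~\ref{rem:marginal} — already suffices, since then $\int_0^t\norm{R_s}_p^2\,\dd s\preceq\int_0^t(x+s)\,\dd s\preceq t^2$, giving a stochastic term $\preceq t=\lerr(x,t)$. For $x\geq t$ I would aim for the sharper bound $\norm{R_s}_p\preceq s/\sqrt{x}+\abs{b}\,s\sqrt{x}$, which, plugged into the above, yields $t^{3/2}/\sqrt{x}$ for $t\leq x\leq1$ and $\sqrt{x}\,t^{3/2}\leq t^{3/2}x$ for $x\geq1$, matching $\lerr$ in the two remaining regimes once combined with the drift term.

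The crux is this sharp bound on $\norm{R_s}_p$ for $x\geq t$, and here lies the main difficulty: when $\delta<1$ the process $\sqrt{X^x}$ is not a semimartingale, so one cannot simply write $R_s=-\int_0^s\frac{\delta-1-bX^x_r}{2\sqrt{X^x_r}}\,\dd r$, and moreover the marginal negative moments $\E{(X^x_r)^{-1/2}}$ fail to exist for $\delta\leq1$ (and $\E{(X^x_r)^{-p/2}}$ fails for large $p$ even when $\delta>1$). I would circumvent both obstructions by conditioning on the event $A=\set{\inf_{0\leq r\leq s}X^x_r\geq x/2}$. On $A$ the path stays bounded away from $0$, hence $\sqrt{X^x}$ is a genuine Itô process there and the drift representation above holds pathwise without any boundary local time; bounding $1/\sqrt{X^x_r}\leq\sqrt{2/x}$ and controlling $\int_0^s\sqrt{X^x_r}\,\dd r$ through its moments then gives $\E{\abs{R_s}^p\ind{A}}^{1/p}\preceq s/\sqrt{x}+\abs{b}\,s\sqrt{x}$. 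On the complement $A^c$ I would use the trivial bound $\abs{R_s}\leq\sqrt{x}+\abs{W_s}+\sqrt{X^x_s}$, Cauchy-Schwarz, and the moment bound $\E{\abs{R_s}^{2p}}\preceq x^{p}$ valid for $x\geq s$, reducing matters to the down-crossing probability $\prob(A^c)=\prob(\inf_{0\leq r\leq s}X^x_r\leq x/2)$. The key technical input is that, for $x\geq s$, this probability decays faster than any fixed power of $s/x$, uniformly in $\delta$, which renders the $A^c$-contribution negligible against $(s/\sqrt{x})^p$. Establishing this down-crossing estimate with sufficient uniformity — drawing on the explicit $\chi^2$-structure of Remark~\ref{rem:marginal}, the scaling $\psi(s)\asymp s$, the comparison principle of Remark~\ref{rem:comparison-principle}, and known hitting-time estimates for Bessel processes in the spirit of \eqref{eq:prob-positive} — is the step I expect to be the most delicate.
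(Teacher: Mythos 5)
Your overall route is the same as the paper's: the identical error representation $\mil(x,t,W_t)-X^x_t=b\int_0^t(X^x_s-x)\,\dd s+2\int_0^t R_s\,\dd W_s$, the same drift estimate, the same BDG/Minkowski reduction to bounding $\norm{R_s}_p$, the same split $x\leq t$ versus $x\geq t$, and the same localization idea (stop the path near $x$, apply It\^o to the stopped square root on the good event, Cauchy--Schwarz plus a hitting-probability bound on the bad event; the paper uses the two-sided stopping time $\tau^x=\inf\set{s\geq0:\abs{X^x_s-x}=x/2}$, you use a one-sided barrier, which is immaterial). The genuine gap sits exactly at the step you flag as the crux: the postulated estimate that $\prob\bigl(\inf_{0\leq r\leq s}X^x_r\leq x/2\bigr)$ decays faster than \emph{any} fixed power of $s/x$ is \emph{false} in the generality needed, and the obstruction is not $\delta$ but $b$ (mean reversion), which your discussion ignores. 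Concretely, for $b>\ln 2$ one has $\E{X^x_1}=xe^{-b}+\delta(1-e^{-b})/b\leq x/2-\eta x$ for some $\eta>0$ and all large $x$, while $\mathrm{Var}(X^x_1)=O(x)$; hence by Chebyshev $\prob\bigl(\inf_{0\leq r\leq 1}X^x_r\leq x/2\bigr)\geq\prob\left(X^x_1\leq x/2\right)\to1$ as $x\to\infty$, even though $s/x=1/x\to0$. So no hitting-time analysis for Bessel processes (which corresponds to $b=0$), nor the $\chi^2$ structure, can deliver your claim, and your Cauchy--Schwarz bound on $A^c$ breaks down precisely in the regime $x\geq1$.

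The repair — and this is what the paper actually does — is to aim lower on both sides. First, the hitting estimate that is both true and sufficient is much weaker: for $x\geq s$, Markov's inequality applied to $\sup_{0\leq r\leq s}\abs{X^x_r-x}$ together with the modulus bound $\bigl(\E{\sup_{0\leq r\leq s}\abs{X^x_r-x}^q}\bigr)^{1/q}\preceq(1+x)s+\sqrt{xs}\preceq xs+\sqrt{xs}$, with $q$ taken large (the paper's ``quadrupling $p$''), gives $\prob(A^c)\preceq s^{2p}/\min(1,x^{2p})$, see \eqref{eq:hitting}. This is $(s/x)^{2p}$ for $x\leq1$ (what your argument needs there) but only $s^{2p}$ for $x\geq1$. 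Second, accordingly relax your target from $\norm{R_s}_p\preceq s/\sqrt{x}+\abs{b}\,s\sqrt{x}$ to $\norm{R_s}_p\preceq s\left(\sqrt{x}+1/\sqrt{x}\right)$, which is exactly the paper's bound \eqref{eq:6}: on $A^c$, Cauchy--Schwarz then only needs $\sqrt{x}\cdot\prob(A^c)^{1/(2p)}\preceq s\sqrt{x}$ for $x\geq 1$ and $\preceq s/\sqrt{x}$ for $x\leq1$, both of which follow from the Markov bound. Integrating, the stochastic term is $\preceq t^{3/2}\left(\sqrt{x}+1/\sqrt{x}\right)$, which is $\preceq\lerr(x,t)$ in both regimes $t\leq x\leq1$ and $x\geq1$, so nothing is lost. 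Note that even in the benign case $b=0$, $x\geq1$, your stronger claim is out of reach of moment/Markov arguments (it would require exponential estimates), so weakening the target, not strengthening the probability bound, is the correct fix; with it your proof closes and is, in substance, the paper's. (A minor further point: your moment bound $\E{(X^x_s)^q}\preceq(x+s)^q$ cannot be ``read off'' from Remark~\ref{rem:marginal}, which concerns the initial value $0$ only; the paper proves it separately via a BDG argument, see \eqref{eq:sol-sup}.)
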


The proof of Proposition~\ref{prop:milstein} exploits the following
simple lemma, which is a refinement of \eqref{eq:linear-growth}.
\begin{lem}
	For every $1\leq p<\infty$ there exists a constant $C>0$ such that
	\begin{align}\label{eq:sol-sup}
		\left( \E{ \sup_{0\leq s\leq t} \abs{X^x_s}^p}\right)^{1/p}
			\leq C\cdot (x+t)
	\end{align}
	for all $x\geq0$ and $t\in[0,1]$.
\end{lem}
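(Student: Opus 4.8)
The plan is to derive a Grönwall-type inequality for $g(t)=\E{\sup_{0\le s\le t}(X^x_s)^p}$ and to exploit the a priori finiteness already provided by \eqref{eq:linear-growth}. Since the solution of \eqref{eq:cir} is non-negative, $\abs{X^x_s}=X^x_s$, and the integral form of \eqref{eq:cir} reads $X^x_s = x + \delta s - b\int_0^s X^x_u\,\dd u + N_s$ with the martingale $N_s=2\int_0^s\sqrt{X^x_u}\,\dd W_u$. Writing $S_t=\sup_{0\le s\le t}X^x_s$ and taking the supremum over $s\in[0,t]$ yields
\begin{align*}
	S_t \le x + \delta t + \abs{b}\int_0^t X^x_u\,\dd u + \sup_{0\le s\le t}\abs{N_s}.
\end{align*}

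First I would raise this to the $p$-th power and take expectations, using $(a_1+a_2+a_3+a_4)^p\le 4^{p-1}(a_1^p+a_2^p+a_3^p+a_4^p)$. The deterministic terms contribute $x^p$ and $\delta^p t^p$. For the drift term, Jensen's (equivalently Hölder's) inequality gives $\E{(\int_0^t X^x_u\,\dd u)^p}\le t^{p-1}\int_0^t \E{S_u^p}\,\dd u \le \int_0^t \E{S_u^p}\,\dd u$, where the last step uses $t^{p-1}\le 1$ for $t\in[0,1]$ and $p\ge1$. For the martingale term I would apply the Burkholder-Davis-Gundy inequality together with the bound $\langle N\rangle_t = 4\int_0^t X^x_u\,\dd u\le 4t\,S_t$, obtaining $\E{\sup_{0\le s\le t}\abs{N_s}^p}\preceq t^{p/2}\,\E{S_t^{p/2}}$.

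The key step — and the main obstacle — is that $\E{S_t^p}$ now reappears on the right-hand side through the factor $\E{S_t^{p/2}}$. To handle this I would split $t^{p/2}S_t^{p/2}$ by Young's inequality as $\eta\,S_t^p + C_\eta\,t^p$ for an arbitrary $\eta>0$, so that the $S_t^p$-contribution carries a prefactor proportional to $\eta$. Choosing $\eta$ small enough that this prefactor is at most $1/2$, and crucially invoking \eqref{eq:linear-growth} to guarantee $\E{S_t^p}<\infty$, I can absorb the offending term into the left-hand side. This leaves an inequality of the form $\E{S_t^p}\le C_1\cdot(x^p+t^p)+C_2\int_0^t\E{S_u^p}\,\dd u$.

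Finally I would apply Grönwall's inequality to $g(t)=\E{S_t^p}$, which is finite and bounded on $[0,1]$ by \eqref{eq:linear-growth}, to conclude $g(t)\le C_1\cdot(x^p+t^p)\cdot e^{C_2 t}\le C\cdot(x^p+t^p)$ for $t\in[0,1]$. Since $x^p+t^p\le(x+t)^p$ for $p\ge1$, taking $p$-th roots yields the claimed bound with a constant depending only on $p$, $\delta$, and $b$.
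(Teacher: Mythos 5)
Your proof is correct, but it follows a genuinely different route from the paper. The paper does not use any Gr\"onwall or absorption argument: it first notes from the exact mean formula \eqref{eq:mean-sol} that $\E{X^x_t}\preceq x+t$, then applies the Burkholder--Davis--Gundy inequality once, feeding this first-moment bound into the quadratic variation $\langle N\rangle_t=4\int_0^t X^x_u\,\dd u$ to obtain \eqref{eq:sol-sup} for $p=2$, and then bootstraps through $p=4,8,16,\dots$ (each stage estimating $\E{\bigl(\int_0^t X^x_u\,\dd u\bigr)^{p/2}}$ via the bound already proved for the halved exponent), which suffices by monotonicity of $L_p$-norms. Your argument instead closes a self-referential inequality for fixed $p$: BDG gives $\E{\sup_{0\leq s\leq t}\abs{N_s}^p}\preceq t^{p/2}\,\E{S_t^{p/2}}$, Young's inequality splits this into $\eta\,\E{S_t^p}+C_\eta t^p$, and the $\eta$-term is absorbed into the left-hand side --- a step that is only legitimate because $\E{S_t^p}<\infty$, which you correctly secure via \eqref{eq:linear-growth} (a localization by stopping times would serve equally well). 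Two remarks on what each approach buys. Your Young-plus-absorption step works, and produces the homogeneous $t^p$ remainder rather than something worse, precisely because the diffusion coefficient $2\sqrt{x}$ vanishes at the origin, so that $\langle N\rangle_t\leq 4t\,S_t$ with no additive constant; had the quadratic variation carried a term $\preceq t$, Young would only yield $x+\sqrt{t}$, which is why this structural point deserves explicit mention. In exchange, your proof is self-contained for each $p$ (no induction over dyadic powers), avoids the explicit mean formula \eqref{eq:mean-sol}, and transfers to SDEs where no closed-form mean is available; the paper's version is shorter here and needs no a priori finiteness, since the stochastic-integral term is always controlled by a previously established lower-power bound rather than by the quantity being estimated. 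Your final steps (Gr\"onwall applied to the bounded, nondecreasing, hence integrable function $g(t)=\E{S_t^p}$, then $x^p+t^p\leq(x+t)^p$) are fine, with constants depending only on $p$, $\delta$, $b$ as claimed.
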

\begin{proof}
	According to \eqref{eq:mean-sol} we have
	\begin{align*}
		\E{X^x_t} \preceq x+t
	\end{align*}
	for $x\geq0$ and $t\in[0,1]$. Combining this with \eqref{eq:linear-growth}
	and a Burkholder-Davis-Gundy-type inequality~\cite[Thm.~1.7.2]{mao}
	we obtain
	\begin{align*}
		\left( \E{ \sup_{0\leq s\leq t} \abs{X^x_s}^2}\right)^{1/2}
			\preceq x + (1+x)\,t + \sqrt{t}\cdot\sqrt{x+t}
	\end{align*}
	for $x\geq0$ and $t\in[0,1]$, i.e., \eqref{eq:sol-sup} holds for $p=2$.
	In the same way we get \eqref{eq:sol-sup} for $p=4,8,16,\ldots$,
	which suffices.
\end{proof}

\begin{proof}[Proof of Proposition~\ref{prop:milstein}]
	We may assume $2\leq p<\infty$. From \eqref{eq:linear-growth},
	a Burkholder-Davis-Gundy-type inequality~\cite[Thm.~1.7.2]{mao},
	and \eqref{eq:sol-sup} we get
	\begin{align}\label{eq:2}
		\left( \E{ \sup_{0\leq s\leq t} \abs{X^x_s-x}^p } \right)^{1/p}
			\preceq (1+x)\,t + \sqrt{t}\cdot\sqrt{x+t}
			\preceq (1+x)\,t + \sqrt{xt}
	\end{align}
	for $x\geq0$ and $t\in[0,1]$. Moreover, we obtain
	\begin{align}\label{eq:3}
		\left( \E{ \abs{\mil(x,t,W_t)-x}^p }\right)^{1/p}
			\preceq (1+x)\,t + \sqrt{xt}+t
			\preceq (1+x)\,t + \sqrt{xt}
	\end{align}
	for $x\geq0$ and $t\in\left]0,1\right]$. Combining \eqref{eq:2} and \eqref{eq:3}
	yields the claim for $x\leq t$.
	Furthermore, according to \eqref{eq:2} the error of the drift term satisfies
	\begin{align*}
		\left( \E{ \abs{
				\int_0^t (\delta-bX^x_s)\,\dd s - \int_0^t (\delta-bx)\,\dd s
			}^p } \right)^{1/p}
			&\preceq (1+x)\,t^2 + \sqrt{x}\cdot t^{3/2} \\
		&\preceq \lerr(x,t)
	\end{align*}
	for $x\geq0$ and $t\in\left]0,1\right]$.
	
	Define the stopping time
	\begin{align*}
		\tau^x = \inf \left\{s\geq 0:\,\abs{X^x_s-x}=x/2 \right\}
	\end{align*}
	for $x\geq 0$. Using Markov's inequality we get
	\begin{align*}
		\prob\left(\tau^x\leq t\right)
			= \prob\left( \sup_{0\leq s\leq t} \abs{X^x_s-x}\geq x/2 \right) 
			\leq \frac{ \E{ \sup_{0\leq s\leq t}\abs{X^x_s-x}^p } }{(x/2)^p} 
	\end{align*}
	for $t\geq0$ and $x>0$, and hence \eqref{eq:2} implies
	\begin{align*}
		\prob\left(\tau^x\leq t\right)
			\preceq \frac{(xt)^p+(xt)^{p/2}}{x^p}
			=t^p+(t/x)^{p/2}
			\leq t^{p/2}+(t/x)^{p/2}
			\leq 2\cdot\frac{ t^{p/2} }{ \min(1,x^{p/2}) }
	\end{align*}
	for $x\geq t$ and $t\in\left]0,1\right]$.
	By quadrupling $p$ we obtain
	\begin{align}\label{eq:hitting}
		\prob\left(\tau^x\leq t\right)
			\preceq \frac{ t^{2p} }{ \min(1,x^{2p}) }
	\end{align}
	for $x\geq t$ and $t\in\left]0,1\right]$.
	Define the stopped process
	$\tilde X^x=(\tilde X^x_t)_{t\geq 0}$ by
	\begin{align*}
		\tilde X^x_t = X^x_{t\wedge\tau^x}
	\end{align*}
	for $x\geq 0$.
	Clearly, $\tilde X^x_t\in[x/2,3x/2]$ for $t\geq 0$.
	It\^{o}'s lemma shows
	\begin{align*}
		\sqrt{\tilde X^x_t} = \sqrt{x} + \int_0^{t\wedge\tau^x}
			\frac{ (\delta-1)-b\tilde X^x_s }{ 2\sqrt{\tilde X^x_s} }\,\dd s
			+ W_{t\wedge\tau^x}
	\end{align*}
	and hence
	\begin{align}\label{eq:4}
		\left( \E{ \abs{ \sqrt{\tilde X^x_t}-\left(\sqrt{x}+W_{t\wedge\tau^x}\right) }^p
			}\right)^{1/p}
			\preceq (1+x)\,t/\sqrt{x}
	\end{align}
	for $t\geq0$ and $x>0$.
	Combining the Cauchy-Schwarz inequality with \eqref{eq:sol-sup}
	and \eqref{eq:hitting} yields
	\begin{align*}
		&\left( \E{ \abs{ \sqrt{X^x_t}-\left(\sqrt{x}+W_t\right) }^p
				\cdot \ind{ \set{\tau^x\leq t} }
			} \right)^{1/p} \\
		&\qquad\leq \left( \E{ \abs{ \sqrt{X^x_t}-\left(\sqrt{x}+W_t\right) }^{2p} }
			\right)^{1/(2p)}
			\cdot \Big( \prob(\tau^x\leq t) \Big)^{1/(2p)} \\
		&\qquad\preceq \left(\sqrt{x+t}+\sqrt{x}+\sqrt{t}\right)\cdot \frac{t}{\min(1,x)}
	\end{align*}
	and hence
	\begin{align}\label{eq:5}
		\left( \E{ \abs{\sqrt{X^x_t}-\left(\sqrt{x}+W_t\right)}^p
				\cdot \ind{ \set{\tau^x\leq t} }
			} \right)^{1/p}
			\preceq t\cdot (\sqrt{x}+1/\sqrt{x})
	\end{align}
	for $x\geq t$ and $t\in\left]0,1\right]$. Moreover, we have
	\begin{align*}
		\left( \E{ \abs{\sqrt{X^x_t}-\left(\sqrt{x}+W_t\right)}^p } \right)^{1/p}
			&\leq \left( \E{ \abs{\sqrt{X^x_t}-\left(\sqrt{x}+W_t\right)}^p
				\cdot \ind{ \set{\tau^x>t} }
			}\right)^{1/p} \\
		&\qquad+ \left( \E{ \abs{\sqrt{X^x_t}-\left(\sqrt{x}+W_t\right)}^p
				\cdot \ind{ \set{\tau^x\leq t} }
			}\right)^{1/p}
	\end{align*}
	such that \eqref{eq:4} and \eqref{eq:5} yield
	\begin{align}\label{eq:6}
		\left( \E{ \abs{\sqrt{X^x_t}-\left(\sqrt{x}+W_t\right)}^p } \right)^{1/p}
			\preceq t\cdot (\sqrt{x}+1/\sqrt{x})
	\end{align}
	for $x\geq t$ and $t\in\left]0,1\right]$.
	Using a Burkholder-Davis-Gundy-type inequality~\cite[Thm.~1.7.2]{mao}
	and \eqref{eq:6} we obtain
	\begin{align*}
		&\left( \E{ \abs{ \int_0^t \sqrt{X^x_s}\,\dd W_s
				-\int_0^t \left(\sqrt{x}+W_s\right)\dd W_s
			}^p } \right)^{1/p} \\
		&\qquad\preceq \sqrt{t} \cdot \sup_{0\leq s\leq t}
			\left( \E{ \abs{\sqrt{X^x_s}-\left(\sqrt{x}+W_s\right)}^p } \right)^{1/p} \\
		&\qquad\preceq \lerr(x,t) 
	\end{align*}
	for $x\geq t$ and $t\in\left]0,1\right]$.
\end{proof}

\subsection{$L_1$-convergence}\label{sec:L1-mil}
Recall that $\mil$ is given by
\begin{align*}
	\mil(x,t,w) = h(x,t,w) + \left( \delta-1-bx \right)\cdot t
\end{align*}
with
\begin{align*}
	h(x,t,w)=\left( \sqrt{x}+w \right)^2.
\end{align*}
Consider the one-step function
$\tmil\colon\R^+_0\times\left]0,1\right]\times\R\to\R^+_0$ given by
\begin{align}\label{eq:truncated-scheme}
	\tmil(x,t,w) = \left(
			\htil(x,t,w) + \left( \delta-1-bx \right)\cdot t
		\right)^+
\end{align}
with
\begin{align*}
	\htil(x,t,w)=\left( \max\left(
			\sqrt{t},\sqrt{\max(t,x)}+w
		\right) \right)^2.
\end{align*}
The corresponding one-step scheme is denoted by $\Ytmil$.
We refer to this scheme as truncated Milstein scheme.
Let us mention that we have separated the nonlinear parts of $\mil$
and $\tmil$, namely $h$ and $\htil$, from
the linear drift term.
The following lemma shows that $\htil$ is $L_1$-Lipschitz continuous
with constant $1$ and that $\htil$ is close to $h$ in a suitable way,
cf.~(A\ref{ass:lipschitz}) and (A\ref{ass:local-error}).
The proof of Lemma~\ref{lem:lipschitz-part} is postponed to the appendix.

\begin{lem}\label{lem:lipschitz-part}
	We have
	\begin{align*}
		\E{ \abs{ \htil(x_1,t,W_t)-\htil(x_2,t,W_t) } } \leq \abs{x_1-x_2}
	\end{align*}
	for all $x_1,x_2\geq0$ and $t\in\left]0,1\right]$.
	Furthermore, for every $1\leq p<\infty$ there exists a constant $C>0$ such that
	\begin{align*}
		\left(\E{ \abs{ h(x,t,W_t)-\htil(x,t,W_t) }^p }\right)^{1/p} \leq C\cdot \lerr(x,t)
	\end{align*}
	for all $x\geq0$ and $t\in\left]0,1\right]$.
\end{lem}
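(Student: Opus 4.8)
The two assertions are essentially independent, so I would prove them in turn. For the Lipschitz estimate the key observation is that for fixed $t$ and $w$ the map $x\mapsto\htil(x,t,w)$ is continuous, nondecreasing (it is the composition of the nondecreasing map $x\mapsto\sqrt{\max(t,x)}$ with $v\mapsto(\max(\sqrt t,v+w))^2$), and piecewise $C^1$. Hence for $0\le x_1\le x_2$ I may write $\htil(x_2,t,w)-\htil(x_1,t,w)=\int_{x_1}^{x_2}\partial_x\htil(x,t,w)\,\dd x$, and Tonelli's theorem reduces the first claim to the pointwise bound $\E{\partial_x\htil(x,t,W_t)}\le1$ for almost every $x>0$.

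A direct computation gives $\partial_x\htil(x,t,w)=0$ for $x<t$, and for $x>t$
\[
	\partial_x\htil(x,t,w)=\left(1+\frac{w}{\sqrt x}\right)\ind{\set{\sqrt x+w>\sqrt t}}.
\]
Writing $W_t=\sqrt t\,Z$ with $Z$ standard normal and setting $a=\sqrt{x/t}\ge1$, the expectation becomes $\prob(Z>1-a)+\tfrac1a\E{Z\,\ind{\set{Z>1-a}}}$. Since $\E{Z\,\ind{\set{Z>c}}}$ equals the standard normal density at $c$, after substituting $y=a-1\ge0$ the desired bound is equivalent to the sharp Gaussian tail inequality
\[
	(1+y)\,\Phi(-y)\ge\tfrac{1}{\sqrt{2\pi}}e^{-y^2/2},\qquad y\ge0,
\]
where $\Phi$ is the standard normal distribution function. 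I would establish this by checking that $f(y)=(1+y)\Phi(-y)-\tfrac{1}{\sqrt{2\pi}}e^{-y^2/2}$ satisfies $f(0)>0$ and $f(y)\to0^+$ as $y\to\infty$, and that $f'(y)=\Phi(-y)-\tfrac{1}{\sqrt{2\pi}}e^{-y^2/2}$ changes sign exactly once, from positive to negative (its own derivative being $(y-1)$ times the density, $f'$ has a single minimum); these facts force $f\ge0$. This exact-constant inequality is the crux of the lemma: the statement asserts Lipschitz constant precisely $1$ (no $1+Kt$ slack), and it is this sharpness that later lets the full scheme satisfy (A\ref{ass:lipschitz}) once the linear drift term $(\delta-1-bx)t$ is added back.

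For the second assertion I would split according to the regions defining $\lerr$. If $x\le t$, then $\htil(x,t,w)=(\sqrt t+w^+)^2$ is independent of $x$, and bounding $h$ and $\htil$ separately via Minkowski's inequality gives $\norm{h(x,t,W_t)-\htil(x,t,W_t)}_{L_p}\preceq(\sqrt x+\sqrt t)^2\preceq t=\lerr(x,t)$. If $x\ge t$, then $\sqrt{\max(t,x)}=\sqrt x$ and a case distinction shows that the two functions differ only on the event $\set{\sqrt x+W_t<\sqrt t}$, where $h-\htil=(\sqrt x+W_t)^2-t$; consequently $\abs{h-\htil}\le\big((\sqrt x+W_t)^2+t\big)\,\ind{\set{W_t<\sqrt t-\sqrt x}}$.

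I would then apply the Cauchy--Schwarz inequality to separate the growth factor from the indicator: the $L_{2p}$-norm of $(\sqrt x+W_t)^2+t$ is $\preceq x+t\asymp x$, while the tail is controlled by the standard estimate $\prob(W_t<\sqrt t-\sqrt x)\le\exp(-(\sqrt{x/t}-1)^2/2)$. This reduces everything to the deterministic inequality $x\,\exp(-(\sqrt{x/t}-1)^2/(4p))\preceq\lerr(x,t)$ for $t\le x$ and $t\in{]0,1]}$, which I would verify by distinguishing $t\le x\le1$ (where $\lerr=t^{3/2}x^{-1/2}$) and $x\ge1$ (where $\lerr=t^{3/2}x$): in each case, after dividing out the appropriate power of $x$, the claim becomes a one-variable comparison of a super-exponentially decaying term against a fixed power of $t/x$ (respectively of $t$), which holds uniformly because exponential decay dominates any polynomial. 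This last step is routine; the only genuine difficulty in the whole lemma is securing the Gaussian inequality with the optimal constant in the first part.
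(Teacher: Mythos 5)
Your proposal is correct, and its skeleton matches the paper's: both arguments hinge on reducing the $L_1$-Lipschitz claim for $\htil$ to the sharp Gaussian (Mills-ratio) inequality $\phi(y)\leq(1+y)\,\Phi(-y)$ for $y\geq0$, and both handle the second claim by isolating the event $\set{\sqrt{x}+W_t<\sqrt{t}}$ and applying Cauchy--Schwarz together with a Gaussian tail bound. The differences are in execution. For the Lipschitz part, the paper first rescales ($W_t=\sqrt{t}\,Z$) to a standalone lemma about $\left(\max(1,\sqrt{x}+Z)\right)^2$, computes $\E{\left(\max(1,x+Z)\right)^2}$ in closed form, differentiates that explicit expression, and then disposes of the resulting inequality by citing Birnbaum's bound $\tfrac{1}{2}\left(\sqrt{4+x^2}-x\right)\phi(x)+\Phi(x)\leq1$ combined with the algebraic estimate $\tfrac{1}{x+1}\leq\tfrac{1}{2}\left(\sqrt{4+x^2}-x\right)$; you instead differentiate under the expectation (using monotonicity of $\htil$ in $x$, piecewise smoothness, and Tonelli) and prove the same inequality by a self-contained sign analysis of $f(y)=(1+y)\Phi(-y)-\phi(y)$, using $f'(y)=\Phi(-y)-\phi(y)$ and $f''(y)=(y-1)\phi(y)$, which checks out. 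Your route avoids the external citation and the explicit Gaussian integral; the paper's route avoids justifying differentiation under the integral sign and isolates the normal-variable computation in a reusable lemma. For the second part, the paper's scaled lemma yields the clean bound $t\cdot C/\sqrt{x/t}=C\,t^{3/2}x^{-1/2}\preceq\lerr(x,t)$ for $x\geq t$, whereas you keep the exponential tail $x\exp\left(-(\sqrt{x/t}-1)^2/(4p)\right)$ and verify the comparison with $\lerr$ by the case distinction $t\leq x\leq1$ versus $x\geq1$; both are valid, the paper's being marginally tidier. Finally, your remark that no slack worse than $1+O(t)$ is affordable in the Lipschitz constant is exactly right: after the rescaling the constant is $t$-independent, so any constant strictly larger than $1$ would blow up over $N$ steps, which is why the sharp Gaussian inequality is unavoidable.
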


The following theorem shows that the truncated Milstein scheme satisfies
the assumptions of Theorem~\ref{thm:strong-L1}.
\begin{thm}[$L_1$-convergence of truncated Milstein scheme]\label{thm:truncated-milstein}
	There exists a constant $K>0$ such that
	\begin{align*}
		\E{ \abs{ \tmil(x_1,t,W_t)-\tmil(x_2,t,W_t) } }
			\leq (1+Kt)\cdot \abs{x_1-x_2}
	\end{align*}
	for all $x_1,x_2\geq0$ and $t\in\left]0,1\right]$, i.e., (A\ref{ass:lipschitz})
	is fulfilled for $p=1$.
	Furthermore, for every $1\leq p<\infty$ there exists a constant $C>0$ such that
	\begin{align*}
		\left(\E{ \abs{ \tmil(x,t,W_t)-X^x_t }^p }\right)^{1/p}
			\leq C\cdot \lerr(x,t)
	\end{align*}
	for all $x\geq0$ and $t\in\left]0,1\right]$, i.e., (A\ref{ass:local-error})
	is fulfilled for every $1\leq p<\infty$.
\end{thm}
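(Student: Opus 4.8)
The plan is to verify the two claimed properties---(A\ref{ass:lipschitz}) for $p=1$ and (A\ref{ass:local-error}) for all $p$---separately, in each case reducing everything to Lemma~\ref{lem:lipschitz-part} and Proposition~\ref{prop:milstein} by exploiting that the truncation $(\cdot)^+$ is the metric projection onto $\R^+_0$ and is therefore $1$-Lipschitz. First I would treat the Lipschitz estimate. Starting from the representation $\tmil(x,t,w)=(\htil(x,t,w)+(\delta-1-bx)t)^+$ and using that $y\mapsto y^+$ is nonexpansive, one gets, pointwise in $w$,
\begin{align*}
	\abs{\tmil(x_1,t,w)-\tmil(x_2,t,w)}
		\leq \abs{\htil(x_1,t,w)-\htil(x_2,t,w)} + \abs{b}\cdot t\cdot\abs{x_1-x_2},
\end{align*}
since the two drift contributions $(\delta-1-bx_i)\cdot t$ differ only by $\abs{b}\cdot t\cdot\abs{x_1-x_2}$. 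Setting $w=W_t$, taking expectations, and applying the first bound of Lemma~\ref{lem:lipschitz-part} yields (A\ref{ass:lipschitz}) with $K=\abs{b}$.

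For the local error I would use the same nonexpansiveness, but now toward the target $X^x_t$. Writing $B=\htil(x,t,W_t)+(\delta-1-bx)t$, so that $\tmil(x,t,W_t)=B^+$, the key observation is that, because $X^x_t\geq0$ already lies in $\R^+_0$, projecting the real number $B$ onto $\R^+_0$ can only decrease its distance to $X^x_t$; that is, $\abs{B^+-X^x_t}\leq\abs{B-X^x_t}$ pointwise. Crucially, the drift term $(\delta-1-bx)t$ is shared by $B$ and by $\mil(x,t,W_t)=h(x,t,W_t)+(\delta-1-bx)t$, so it cancels in the decomposition
\begin{align*}
	B - X^x_t = \big( \htil(x,t,W_t)-h(x,t,W_t) \big) + \big( \mil(x,t,W_t)-X^x_t \big).
\end{align*}
Minkowski's inequality then splits the $L_p$-norm of the local error into two pieces, the first controlled by the second bound of Lemma~\ref{lem:lipschitz-part} and the second by Proposition~\ref{prop:milstein}, each of which is at most a constant times $\lerr(x,t)$. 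This establishes (A\ref{ass:local-error}) for every $1\leq p<\infty$.

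These computations are routine once the two cited results are available; the only point that requires genuine care is the observation that the truncation cannot increase the local error, i.e.\ the nonexpansiveness of $(\cdot)^+$ relative to the nonnegative target $X^x_t$. The real substance of the theorem has already been isolated elsewhere: Lemma~\ref{lem:lipschitz-part} (proved in the appendix) does the delicate work of showing that $\htil$ is a $1$-Lipschitz modification of $h$ that remains $\lerr$-close to it, while Proposition~\ref{prop:milstein} supplies the stochastic analysis of the untruncated Milstein step. I therefore expect no serious obstacle here; the role of this theorem is to assemble those pieces and confirm that passing from $h,\mil$ to their truncated counterparts $\htil,\tmil$ preserves both the $L_1$-Lipschitz property and the $L_p$-Milstein local-error rate.
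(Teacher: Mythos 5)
Your proposal is correct and follows essentially the same route as the paper: both parts rest on the nonexpansiveness of $y\mapsto y^+$ (toward an arbitrary real for the Lipschitz bound, toward the nonnegative target $X^x_t$ for the local error), after which the Lipschitz estimate reduces to the first part of Lemma~\ref{lem:lipschitz-part} plus the drift contribution $\abs{b}t\abs{x_1-x_2}$, and the local error splits via Minkowski into $h-\htil$ (second part of Lemma~\ref{lem:lipschitz-part}) and $\mil-X^x_t$ (Proposition~\ref{prop:milstein}), with the shared drift term cancelling exactly as in the paper's proof.
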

\begin{proof}
	Using $\abs{y^+-z^+}\leq\abs{y-z}$ for $y,z\in\R$ we obtain
	\begin{align*}
		&\E{ \abs{ \tmil(x_1,t,W_t)-\tmil(x_2,t,W_t) } } \\
		&\qquad\leq\E{ \abs{
				\htil(x_1,t,W_t)-\htil(x_2,t,W_t) + (x_2-x_1)\cdot bt
			} } \\
		&\qquad\leq (1+bt)\cdot \abs{x_1-x_2}
	\end{align*}
	due to the first part of Lemma~\ref{lem:lipschitz-part}.
	Moreover, using $\abs{z^+-y}\leq\abs{z-y}$ for $y\geq0$ and $z\in\R$ we have
	\begin{align*}
		&\left(\E{ \abs{ \tmil(x,t,W_t)-X^x_t }^p }\right)^{1/p} \\
		&\qquad\leq \left(\E{ \abs{
				\htil(x,t,W_t) + (\delta-1-bx)\cdot t-X^x_t
			}^p }\right)^{1/p} \\
		&\qquad\leq \left(\E{ \abs{ \mil(x,t,W_t)-X^x_t }^p }\right)^{1/p}
			+ \left(\E{ \abs{ h(x,t,W_t)-\htil(x,t,W_t) }^p }\right)^{1/p}.
	\end{align*}
	Applying Proposition~\ref{prop:milstein} and the second part of Lemma~\ref{lem:lipschitz-part}
	yields the second statement.
\end{proof}

\begin{rem}
	Let us stress that there is some freedom regarding the particular
	form of the truncated Milstein scheme. For instance, the proof
	of Theorem~\ref{thm:truncated-milstein} shows that the positive
	part in \eqref{eq:truncated-scheme} may be replaced by the absolute
	value.
\end{rem}

\subsection{$L_p$-convergence}\label{sec:Lp-mil}
In this section we show that the truncated Milstein scheme $\Ytmil$ defined by
the one-step function $\tmil$ given in \eqref{eq:truncated-scheme} is uniformly
bounded and hence satisfies the assumptions of Corollary~\ref{cor:p}.

\begin{prop}\label{lem:bounded}
	For every $1\leq q<\infty$ there exists a constant $C>0$ such that
	\begin{align*}
		\sup_{0\leq t\leq 1} \left( \E{ \abs{ \Ytmil_t }^q } \right)^{1/q}
			\leq C\cdot (1+x)
	\end{align*}
	for all $x\geq0$ and $N\in\N$, i.e., (A\ref{ass:bounded})
	is fulfilled.
\end{prop}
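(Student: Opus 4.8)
The plan is to reduce the assertion to a discrete Gronwall recursion for the grid moments of the scheme, concentrating the entire difficulty in one sharp one-step estimate for the nonlinear part $\htil$. By the constant interpolation \eqref{eq:oss-int} and nonnegativity of $\Ytmil$ we have $\sup_{0\le t\le 1}\E{\abs{\Ytmil_t}^q}=\max_{n=0,\ldots,N}\E{\abs{\Ytmil_{n/N}}^q}$, so it suffices to bound the values on the grid. Writing $Y_n=\Ytmil_{n/N}$ and $h=1/N$, these satisfy $Y_{n+1}=\tmil(Y_n,h,\dWN_n)$ with $\dWN_n\sim\ndist{0}{h}$ independent of $Y_n$. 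Since $(\E{\abs{Z}^q})^{1/q}$ is nondecreasing in $q$, it is enough to treat even integers $q\in\set{2,4,8,\dots}$, where the relevant binomial expansions are exact; the general case follows.

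The second step is a pointwise one-step bound. Since $z^+\le\abs{z}$ and $\htil\ge0$, from \eqref{eq:truncated-scheme} we obtain $Y_{n+1}\le\htil(Y_n,h,\dWN_n)+(\abs{\delta-1}+\abs{b}\,Y_n)\cdot h$. The heart of the proof is the refined estimate
\[
	\htil(y,h,w)\le\bigl(\sqrt{\max(h,y)}+w\bigr)^2+h,
\]
which I would verify directly by splitting along the outer maximum in the definition of $\htil$: if $\sqrt{\max(h,y)}+w\ge\sqrt{h}$ the square equals $(\sqrt{\max(h,y)}+w)^2$, and otherwise it equals $h$, so the right-hand side dominates in both cases. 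What matters is that this retains the \emph{signed} increment $w$ rather than $\abs{w}$.

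Next, conditioning on $Y_n=y$ and writing $a=\sqrt{\max(h,y)}$, I would compute $\E{(a+\dWN_n)^{2k}}$ using that all odd moments of the centred Gaussian increment vanish and that $h\le a^2$, which collapses the higher powers of $h$; this gives $\E{\htil^k}\le(\max(h,y))^k+C_k\cdot(\max(h,y))^{k-1}\cdot h$ for $k\ge1$. Incorporating the affine drift contribution and using $\max(h,y)\le(1+y)(1+Ch)$, one assembles the multiplicative one-step inequality $\E{(1+Y_{n+1})^q\mid Y_n}\le(1+Ch)\cdot(1+Y_n)^q$, with $C$ depending only on $q,\delta,b$. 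Taking expectations and iterating over $n\le N$ yields $\E{(1+Y_n)^q}\le(1+Ch)^N\cdot(1+x)^q\le e^{C}\cdot(1+x)^q$, since $Nh=1$, which is the claim.

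The main obstacle is exactly the one-step estimate for $\htil$. The naive bound $\htil(y,h,w)\le(\sqrt{\max(h,y)}+\abs{w})^2$ produces a correction of order $\E{\abs{\dWN_n}}\asymp\sqrt{h}$ (a per-step factor $1+C\sqrt{h}$ already for $y\asymp1$), which compounds to $e^{C/\sqrt{h}}$ over $N=1/h$ steps and destroys the bound. Keeping the signed increment, so that the odd Gaussian moments cancel and every correction is genuinely of order $h$, is what makes the Gronwall factor multiplicative of the form $1+Ch$. A secondary technical point is to absorb the state-dependent drift term $\abs{b}\,Y_n\,h$ multiplicatively into $(1+Ch)(1+Y_n)^q$ rather than additively, so that the recursion closes uniformly in $N$.
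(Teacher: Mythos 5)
Your argument is correct, but it takes a genuinely different route from the paper's. The paper argues by \emph{pathwise domination}: it bounds $\htil(x,t,w)\le 3t+2w^2+(\sqrt{x}+w)^2$, uses monotonicity of $\max(\sqrt t,\sqrt x+w)$ in $x$ to dominate the truncated scheme by an auxiliary one-step scheme $\ZN$ with one-step map $g(x,t,w)=x+(\delta+\abs{b}x+3)t+3w^2+2\sqrt{x}\,w$, rewrites $\ZN$ at the grid points as a stochastic integral equation (absorbing $w^2$ via It\^o as $t+2\int(W_s-\Wbar_s)\,\dd W_s$), and then runs the standard linear-growth machinery: a Burkholder--Davis--Gundy-type inequality followed by Gronwall's lemma. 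You instead run a purely discrete conditional moment recursion: your pointwise bound $\htil(y,h,w)\le(\sqrt{\max(h,y)}+w)^2+h$ is correct (split on whether the outer maximum is attained by $\sqrt h$), the exact Gaussian computation with vanishing odd moments and $h\le\max(h,y)$ gives the $O(h)$ correction, and the absorption $\max(h,y)+\text{drift}\le(1+y)(1+Ch)$ closes the recursion $\E{(1+Y_{n+1})^q\mid Y_n}\le(1+Ch)(1+Y_n)^q$, whence $(1+C/N)^N\le e^C$ finishes; the reduction to even integer $q$ and to grid points is also sound. Both proofs hinge on the structural point you single out: the martingale part must be kept \emph{signed} --- the paper preserves it as the stochastic integral $2\int\sqrt{\ZN_s}\,\dd W_s$ and lets BDG exploit the cancellation, while you exploit it directly through vanishing odd Gaussian moments; a naive bound via $\abs{w}$ would indeed give a per-step factor $1+C\sqrt h$ and blow up. What each approach buys: yours is more elementary and self-contained (no stochastic integrals, no BDG, no continuous-time Gronwall, and no need for the monotonicity in $x$ that the paper's inductive comparison $0\le\Ytmil_t\le\ZN_t$ requires), whereas the paper's route yields en passant the stronger estimate $\E{\sup_{0\le s\le 1}\abs{\ZN_s}^q}\preceq(1+x)^q$, with the supremum \emph{inside} the expectation, and connects to standard references such as \cite[Lem.~2.6.1]{mao}.
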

\begin{proof}
	Let $x\geq0$, $t\in\left]0,1\right]$, and $w\in\R$. At first, note that
	\begin{align*}
		\htil(x,t,w) &\leq
			\left( \max\left(\sqrt{t},\sqrt{t}+w\right) \right)^2
			+ \left( \max\left(\sqrt{t},\sqrt{x}+w\right) \right)^2 \\
		&\leq 2t + 2 w^2 + t + \left(\sqrt{x}+w\right)^2
	\end{align*}
	and
	\begin{align*}
		\left( \delta-1-bx \right)\cdot t \leq ( \delta+\abs{b}x ) \cdot t.
	\end{align*}
	Moreover, note that $\max\left(\sqrt{t},\sqrt{x}+w\right)$
	is monotonically increasing in $x$.
	Hence the auxiliary one-step function
	$g\colon\R^+_0\times\left]0,1\right]\times\R\to\R^+_0$ defined by
	\begin{align*}
		g(x,t,w) = x + ( \delta+\abs{b}x+3 ) \cdot t
			+ 3w^2 + 2\sqrt{x}\cdot w
	\end{align*}
	satisfies
	\begin{align*}
		\tmil(x_1,t,w)\leq g(x_2,t,w).
	\end{align*}
	for $0\leq x_1\leq x_2$. This yields
	\begin{align}\label{eq:aux-scheme-bound}
		0\leq \Ytmil_t \leq \ZN_t
	\end{align}
	for all $t\geq0$, where $\ZN$ denotes the corresponding auxiliary scheme
	with piecewise constant interpolation. Moreover, the auxiliary scheme
	satisfies the integral equation
	\begin{align*}
		\ZN_t = x + \int_0^t \left( \delta+\abs{b}\ZN_s+6 \right) \dd s
			+ \int_0^t \left(
				2\sqrt{\ZN_s} + 6\,(W_s-\Wbar_s)
			\right) \dd W_s
	\end{align*}
	for $t=0,1/N,2/N,\ldots$, where $(\Wbar_t)_{t\geq0}$
	denotes the piecewise constant interpolation of $W$ at the grid of
	mesh size $1/N$, i.e., $\Wbar_t=W_{\lfloor tN\rfloor/N}$.
	Due to this integral equation we can now apply standard techniques
	exploiting the linear growth condition to obtain uniform boundedness,
	cf.~\cite[Lem.~2.6.1]{mao}. Let $2\leq q<\infty$. Straightforward
	calculations show
	\begin{align*}
		\abs{\ZN_s}^q &\preceq 1 + x^q + \int_0^s \abs{\ZN_u}^q \dd u \\
		&\qquad\qquad+
			\abs{ \int_0^s \left(2\sqrt{\ZN_u}+6\,(W_u-\Wbar_u)\right) \dd W_u }^q
	\end{align*}
	for $x\geq0$ and $s=0,1/N,\ldots,1$. This yields
	\begin{align*}
		\sup_{0\leq s\leq t} \abs{\ZN_s}^q &\preceq 1 + x^q + \int_0^t \abs{\ZN_u}^q \dd u \\
		&\qquad\qquad+ \sup_{0\leq s\leq t}
			\abs{ \int_0^s \left(2\sqrt{\ZN_u}+6\,(W_u-\Wbar_u)\right) \dd W_u }^q
	\end{align*}
	for $x\geq0$ and $t\in[0,1]$.
	Using a Burkholder-Davis-Gundy-type inequality \cite[Thm.~1.7.2]{mao} and
	the linear growth condition we obtain
	\begin{align*}
		\E{ \sup_{0\leq s\leq t} \abs{\ZN_s}^q } &\preceq 1 + x^q
			+ \int_0^t \E{ \sup_{0\leq s\leq u} \abs{\ZN_s}^q } \dd u \\
		&\qquad\qquad+ \int_0^t \E{ \abs{2\sqrt{\ZN_u}+6\,(W_u-\Wbar_u)}^q } \dd u \\
		&\preceq 1 + x^q
			+ \int_0^t \E{ \sup_{0\leq s\leq u} \abs{\ZN_s}^q } \dd u
	\end{align*}
	for $x\geq0$ and $t\in[0,1]$.
	Applying Gronwall's lemma yields the desired inequality for the
	auxiliary scheme. Due to \eqref{eq:aux-scheme-bound} this
	inequality also holds for the truncated Milstein scheme.
\end{proof}

\section*{Appendix}

\begin{lem}\label{lem:lipschitz-normal}
	Let $Z$ be standard normally distributed. We have
	\begin{align*}
		\E{ \abs{ \left(\max(1,\sqrt{x_1}+Z)\right)^2
				-\left(\max(1,\sqrt{x_2}+Z)\right)^2
			} } \leq \abs{x_1-x_2}
	\end{align*}
	for all $x_1,x_2\geq1$.
	Furthermore, for every $1\leq p<\infty$ there exists a constant $C>0$ such that
	\begin{align*}
		\left(\E{ \abs{ \left(\max(1,\sqrt{x}+Z)\right)^2
				-\left(\sqrt{x}+Z\right)^2
			}^p }\right)^{1/p} \leq C\cdot \frac{1}{\sqrt{x}}
	\end{align*}
	for all $x\geq 1$.
\end{lem}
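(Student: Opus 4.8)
The plan is to treat the two statements separately, using that $y\mapsto(\max(1,y))^2$ is non-decreasing and differentiable away from $y=1$. Throughout let $\Phi$ and $\phi$ denote the distribution function and density of $Z$. For the first inequality I would assume without loss of generality $x_1\le x_2$ and set $f(x)=(\max(1,\sqrt x+Z))^2$. Since $x\mapsto\sqrt x+Z$ is increasing and $\max(1,\cdot)$ is non-decreasing, $f$ is non-decreasing in $x$, so the absolute value disappears and it suffices to show that $H(x)=\E{f(x)}$ satisfies $H(x_2)-H(x_1)\le x_2-x_1$. Because $H'$ will turn out to be continuous and bounded on $[1,\infty)$, this reduces by the fundamental theorem of calculus to the pointwise bound $H'(x)\le1$ for $x\ge1$.

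Next I would compute $H'(x)$ by differentiating under the expectation, which is justified by dominated convergence since for $x\ge1$ the pathwise derivative $(1+Z/\sqrt x)\ind{\set{\sqrt x+Z>1}}$ is dominated by the integrable $1+\abs Z$ (the non-smoothness at $\sqrt x+Z=1$ occurs on a null set). Using $\E{Z\cdot\ind{\set{Z>a}}}=\phi(a)$ this gives
\begin{align*}
	H'(x)=\Phi(\sqrt x-1)+\frac{1}{\sqrt x}\,\phi(\sqrt x-1).
\end{align*}
Writing $t=\sqrt x-1\ge0$, the desired bound $H'(x)\le1$ becomes the Mills-ratio inequality
\begin{align*}
	(1+t)\cdot(1-\Phi(t))\ge\phi(t),\qquad t\ge0.
\end{align*}
This is the genuine obstacle, since the sharp constant $1$ in the lemma (which is exactly what makes $\htil$ be $L_1$-Lipschitz with constant one in Lemma~\ref{lem:lipschitz-part}) leaves no slack. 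I would obtain it from the standard lower bound $1-\Phi(t)\ge 2\phi(t)/(t+\sqrt{t^2+4})$ for the Gaussian tail, after which the claim reduces to the elementary $2(1+t)\ge t+\sqrt{t^2+4}$, i.e.\ $(2+t)^2\ge t^2+4$. Alternatively one checks that $g(t)=(1+t)(1-\Phi(t))-\phi(t)$ has $g(0)>0$, $g(t)\to0$ as $t\to\infty$, and $g'(t)=(1-\Phi(t))-\phi(t)$ changes sign exactly once from positive to negative, so $g$ increases and then decreases to $0$ and hence stays non-negative.

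For the second inequality the key observation is that $(\max(1,s))^2-s^2$ vanishes on $\set{s\ge1}$ and equals $1-s^2$ otherwise, so with $s=\sqrt x+Z$ the random variable in question is $\abs{1-s^2}\cdot\ind{\set{s<1}}$, supported on the tail event $\set{Z<1-\sqrt x}$. I would split this by the Cauchy--Schwarz inequality as
\begin{align*}
	\E{\abs{1-s^2}^p\cdot\ind{\set{s<1}}}
		\le\left(\E{\abs{1-s^2}^{2p}}\right)^{1/2}\cdot\big(\prob(s<1)\big)^{1/2}.
\end{align*}
The first factor is a moment of a Gaussian and is bounded by $C_p\cdot(1+x)^p$, while $\prob(s<1)=\Phi(1-\sqrt x)\le\tfrac12 e^{-(\sqrt x-1)^2/2}$ by the usual tail estimate for $x\ge1$. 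Taking $p$-th roots, the bound becomes $C\cdot(1+x)\cdot e^{-(\sqrt x-1)^2/(4p)}$, and since $\sqrt x\cdot(1+x)\cdot e^{-(\sqrt x-1)^2/(4p)}$ is bounded on $[1,\infty)$, the super-polynomial Gaussian decay comfortably absorbs the polynomial factor and yields the claimed $C/\sqrt x$. This second part is robust and carries plenty of room; essentially all of the difficulty is concentrated in the sharp Mills-ratio step of the first statement.
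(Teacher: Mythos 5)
Your proposal is correct and follows essentially the same route as the paper's proof: in both, the first claim is reduced via monotonicity to the derivative bound $\Phi(\sqrt{x}-1)+\tfrac{1}{\sqrt{x}}\,\phi(\sqrt{x}-1)\leq 1$, which is settled by the identical Mills-ratio lower bound (your $1-\Phi(t)\geq 2\phi(t)/(t+\sqrt{t^2+4})$ is exactly Birnbaum's inequality cited in the paper, and your reduction $(2+t)^2\geq t^2+4$ matches the paper's elementary step), while the second claim is handled in both by Cauchy--Schwarz on the event $\set{\sqrt{x}+Z<1}$ together with a Gaussian moment bound and tail estimate. The only cosmetic differences are that you differentiate under the expectation (with a correct domination argument) where the paper first computes $\E{(\max(1,x+Z))^2}$ in closed form, and your alternative monotonicity argument for the Mills-ratio inequality makes the proof self-contained rather than relying on the citation.
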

\begin{proof}
	Let $\phi$ and $\Phi$ denote the Lebesgue density and the
	distribution function of the standard normal distribution,
	respectively. Moreover, let $f\colon\R\to\R$ be given by
	\begin{align*}
		f(x) &= \E{ \left(\max\left(1,x+Z\right)\right)^2 } \\
		&= \Phi(1-x) + x^2 \Phi(x-1)
			+ 2x\int_{1-x}^\infty z\phi(z)\,\dd z
			+ \int_{1-x}^\infty z^2\phi(z)\,\dd z \\
		&= \Phi(1-x) + x^2 \Phi(x-1)
			+ 2x\,\phi(1-x) \\
		&\qquad\qquad+ 1+(1-x)\,\phi(1-x)-\Phi(1-x) \\
		&= 1+ (1+x)\,\phi(1-x) + x^2\Phi(x-1).
	\end{align*}
	Hence the derivative of $f$ reads
	\begin{align*}
		f'(x) &= \phi(1-x) - (1+x)\,\phi'(1-x)
			+2x\,\Phi(x-1) + x^2\phi(x-1) \\
		&= 2\phi(1-x) + 2x\,\Phi(x-1).
	\end{align*}
	For $x>0$ we define $g(x)=f(\sqrt{x})$ such that
	\begin{align*}
		g'(x) = \frac{1}{\sqrt{x}}\cdot \phi(\sqrt{x}-1) + \Phi(\sqrt{x}-1).
	\end{align*}
	Using
	\begin{align*}
		\frac{1}{x+1} \leq \frac{ \sqrt{4+x^2}-x }{2}
	\end{align*}
	for $x\geq0$ and \cite{birnbaum}, we have
	\begin{align*}
		\frac{1}{x+1}\cdot\phi(x)+\Phi(x)
			\leq \frac{ \sqrt{4+x^2}-x }{2}\cdot\phi(x)+\Phi(x) \leq 1
	\end{align*}
	for $x\geq0$. This yields
	\begin{align*}
		g'(x)\leq 1
	\end{align*}
	for $x\geq1$. For $x_1\geq x_2\geq1$ we hence get
	\begin{align*}
		\E{ \abs{ \left(\max(1,\sqrt{x_1}+Z)\right)^2
				-\left(\max(1,\sqrt{x_2}+Z)\right)^2
			} } 
			= g(x_1)-g(x_2) \leq x_1-x_2,
	\end{align*}
	which shows the first claim.
	
	The Cauchy-Schwarz inequality implies
	\begin{align*}
		&\E{ \abs{ \left(\max(1,\sqrt{x}+Z)\right)^2
				-\left(\sqrt{x}+Z\right)^2
			}^p } \\
		&\qquad= \E{ \abs{ 1-\left(\sqrt{x}+Z\right)^2 }^p
			\cdot \ind{\set{\sqrt{x}+Z<1}} } \\
		&\qquad\leq \sqrt{ \E{ \abs{ 1-\left(\sqrt{x}+Z\right)^2 }^{2p} } }
			\cdot \sqrt{ \prob(\sqrt{x}+Z<1) }.
	\end{align*}
	Hence we get
	\begin{align*}
		&\left( \E{ \abs{ \left(\max(1,\sqrt{x}+Z)\right)^2
				-\left(\sqrt{x}+Z\right)^2
			}^p } \right)^{1/p} \\
		&\qquad\leq \left( 1+2x+2\cdot \left(\E{(Z^2)^{2p}}\right)^{1/(2p)}
			\right) \cdot \left(\prob(Z>\sqrt{x}-1)\right)^{1/(2p)} \\
		&\qquad\preceq x\cdot \left(\prob(Z>\sqrt{x}-1)\right)^{1/(2p)}
	\end{align*}
	for $x\geq 1$. By using a standard tail estimate for the standard
	normal distribution, see, e.g., \cite[p.~31]{revuz-yor}, we get the second claim.
\end{proof}

\begin{rem}
	The upper bound in the second statement in Lemma~\ref{lem:lipschitz-normal}
	can be considerably improved due to the exponential decay of the density of the
	standard normal distribution. However, $1/\sqrt{x}$ suffices for
	our purposes.
\end{rem}

\begin{proof}[Proof of Lemma~\ref{lem:lipschitz-part}]
	At first, note that
		\begin{align}\label{eq:normal-scaling1}
		h(x,t,w) = t\cdot \left(\sqrt{x/t}+{w}/{\sqrt{t}}
		\right)^2
	\end{align}
	and
	\begin{align}\label{eq:normal-scaling2}
		\htil(x,t,w) = t\cdot \left(\max\left(
			1,\sqrt{\max\left(1,x/t\right)}+{w}/{\sqrt{t}}
		\right)\right)^2.
	\end{align}
	Using \eqref{eq:normal-scaling2} and the first part of Lemma~\ref{lem:lipschitz-normal}
	we obtain 
	\begin{align*}
		\E{ \abs{ \htil(x_1,t,W_t)-\htil(x_2,t,W_t) } }
			\leq t\cdot \abs{\max\left(1,x_1/t\right)-\max\left(1,x_2/t\right)}
			\leq \abs{x_1-x_2}. 
	\end{align*}
	Moreover, using \eqref{eq:normal-scaling1}, \eqref{eq:normal-scaling2}, and
	the second part of Lemma~\ref{lem:lipschitz-normal} we obtain 
	\begin{align*}
		\left(\E{ \abs{ h(x,t,W_t)-\htil(x,t,W_t)}^p }\right)^{1/p}
			\leq t\cdot C\cdot\frac{1}{\sqrt{x/t}}
			\preceq \lerr(x,t)
	\end{align*}
	for $x\geq t$. Finally, we have
	\begin{align*}
		&\left( \E{ \abs{h(x,t,W_t)-\htil(x,t,W_t)}^p } \right)^{1/p} \\
		&\qquad\leq \left( \E{ \abs{ h(x,t,W_t)}^p } \right)^{1/p}
			+ \left( \E{ \abs{ \htil(x,t,W_t)}^p }\right)^{1/p}
			\preceq t = \lerr(x,t)
	\end{align*}
	for $x\leq t$.
\end{proof}

\section*{Acknowledgement}
We thank Thomas M\"uller-Gronbach and Klaus Ritter for valuable discussions
and comments.

\bibliographystyle{plain}
\renewcommand*{\bibname}{References}
\bibliography{references}

\end{document}